\newtheorem{thm}{Theorem}[section]
\newtheorem{lem}[thm]{Lemma}
\newtheorem{prop}[thm]{Proposition}
\theoremstyle{definition}
\newtheorem{defn}[thm]{Definition}
\newtheorem{Rem}[thm]{Remark}
\theoremstyle{remark}
\numberwithin{equation}{subsection}
\def\XXint#1#2#3{{\setbox0=\hbox{$#1{#2#3}{\int}$}
\vcenter{\hbox{$#2#3$}}\kern-.5\wd0}}
\newcommand{\R}{\mathbb{R}}
\newcommand{\bbC}{\mathbb{C}}
\newcommand{\bbR}{\mathbb{R}}
\newcommand{\bbZ}{\mathbb{Z}}
\newcommand{\frakh}{\mathfrak{h}}
\newcommand{\frakt}{\mathfrak{t}}
\newcommand{\vphi}{\varphi}
\newcommand{\grad}{{\rm grad}}
\def\@makefnmark{%
\leavevmode
\raise.9ex\hbox{\check@mathfonts
\fontsize\sf@size\z@\normalfont%
\@thefnmark}%
}
\title[The generalized Pythagorean theorem on the compactifications]{The generalized Pythagorean theorem on the compactifications of certain dually flat spaces via toric geometry}
\author{Hajime Fujita}
\address[H.~Fujita]{Japan Women's University}
\email{fujitah@fc.jwu.ac.jp}
\subjclass[2010]{Primary 53B12, Secondary 53D20, 53C55} 
\keywords{Dually flat space, Delzant polytope, toric K\"ahler manifold, Bregman divergence}
\begin{document}
\maketitle
\begin{abstract}
In this paper we study dually flat spaces arising from Delzant polytopes equipped with a symplectic potential together with their corresponding toric K\"ahler manifolds as their torifications. 
 We introduce a dually flat structure and the associated Bregman divergence on the boundary from the viewpoint of toric K\"ahler geometry.  
 We show a continuity and a generalized Pythagorean theorem for the divergence on the boundary. 
 We also provide a characterization for a  toric K\"ahler manifold to become a torification of a mixture family on a finite set.  
\end{abstract}
 
 \tableofcontents
 
 %
 %
 \section{Introduction}
 
 The notion of {\it dually flat space} also known as the {\it Hesse manifold} gives a unified perspective through a geometrical approach to various fields including statistical science and information theory. 
Any strictly convex function on a convex domain of affine space determines a dually flat space, which includes several important examples such as the {\it exponential family}. 
In this context, the convex function and its Hessian are referred to as the potential function and the Hesse metric, respectively.   
The Hesse metric is equal to the {\it Fisher metric} for the exponential family. 
Another important quantity is the {\it divergence} of the dually flat space. 
One example is the {\it Bregman divergence} which is defined in the context of {\it Legendre duality}. 
The divergence can be considered as a kind of square of distance on a Riemannian manifold. 
One fundamental property of divergence is the {\it generalized Pythagorean theorem} which is  important for applications in statistical inference.

Given a dually flat space, we can associate a K\"ahler structure on its tangent bundle in a way that we call Dombrowski's construction (\cite{Dombrowski}). 
 Recently Molitor (\cite{Molitor}) investigated a relation between dually flat spaces and {\it toric K\"ahler manifolds} by introducing the notion of {\it torification}. 
 The theory of toric manifold was originally developed in algebraic geometry, although it has become clear it is also interesting object in the framework of Hamiltonian torus action in symplectic geometry.  It has been actively studied as a theory of {\it toric symplectic manifolds/geometry}.
 Every compact toric symplectic manifold has natural K\"ahler structure, giving rise to a {\it toric K\"ahler manifolds/geometry}. 
 A remarkable feature in toric K\"ahler geometry is the fact that several properties and quantities on a compact toric K\"ahler manifold can be described by objects on the moment map image. 
 The image is a convex polytope which satisfies some integrality conditions and forms a class of polytopes called {\it Delzant polytopes}. 
 The description of the Riemannian metric on any compact toric K\"ahler manifold was obtained in \cite{Guillemin}, \cite{Abreu} and \cite{ApostolovCalderbankGauduchon}. 
 More precisely, the Riemannian metric is obtained as a Hesse metric on the interior of the Delzant polytope essentially through Dombrowski's construction. 
 The potential function is called a {\it symplectic potential}.  
 Conversely, for any Delzant polytope with a symplectic potential one has a dually flat space,  
and furthermore one can construct a compact toric K\"ahler manifold by applying the famous Delzant construction. 
 This implies that each Delzant polytope with symplectic potential has a natural torification. 
 
 In this paper we study the torification and the boundary behavior of the dually flat space arising from Delzant polytope with symplectic potential.  
 The Hesse metric itself cannot be extended to the boundary, however, one can see that the dually flat structure and the associated Bregman divergence can be naturally defined on the boundary by using basic property of toric K\"ahler geometry. See Definition~\ref{defn:boundary dually flat}. 
 We emphasize that we use {\it symplectic coordinate} in our computation. 
 This corresponds to the use of {\it mixture parameter} rather than {\it canonical parameter} in the case of exponential family, which gives us an advantage in capturing the boundary behavior. 
 We also provide a characterization for compact toric K\"ahler manifold to become a torification of a {\it mixture family} on a finite set.
Our main results in this paper are as follows. 


\medskip

\noindent
{\bf Main results(Theorem~\ref{prop:M_P to P}, \ref{thm :  conti divergence}, \ref{thm:Pythagorean theorem} and \ref{thm:Pythagorean theorem2}). }
\\
{\it
1.  A compact toric K\"ahler manifold gives a torification of a mixture family on a finite set if and only if a zero-sum condition (\ref{eq:zerosum}) is satisfied. 
\\
2. For a Delzant polytope with a symplectic potential and its Bregman divergence we have : 
\begin{itemize}
\item a continuity on the boundary and 
\item the generalized Pythagorean theorem that is also applicable to boundary points.
\end{itemize}
}

\noindent
The second result asserts that the divergence can be defined on the compactification of the dually flat space arising from any Delzant polytope so that the generalized Pythagorean theorem holds. 
The foundation for these results is Proposition~\ref{prop : Divergence of P}, which presents a formula for the Bregman divergence of a Delzant polytope with a symplectic potential. 

A typical example where our results is applicable is the standard toric action on $\bbC P^n$ with the corresponding standard $n$-simplex as a Delzant polytope. 
In the context of information geometry this corresponds to the family of probability density functions on the finite set of $(n+1)$-points, often called the {\it categorical distribution}. 
Typically, zero probability cases are not explicitly addressed, although,  our results at the boundary provide a framework that encompasses such cases.

Recently Nakajima-Ohmoto (\cite{Nakajima-Ohmoto}) gave a reformulation of dually flat spaces in terms of Legendre fibration and extended it so that one can handle a statistical model with degeneration of Fisher metric. 
It would be interesting to investigate how the torification or our construction can be interpreted within Nakajima-Ohmoto's formulation. 

The organization of this paper is as follows.  
In Section~2 we first give a brief review on toric geometry from a viewpoint of symplectic geometry. 
After that we give a description of Riemannian metric of torus invariant K\"ahler structure along \cite{Guillemin}, \cite{Abreu} and \cite{ApostolovCalderbankGauduchon}. 
In Section~3 we put together fundamental definitions and facts of dually flat spaces and their torification. 
In Section~4 we explain a dually flat structure on a Delzant polytope with a symplectic potential.  
We derive a formula for the Bregman divergence in Proposition~\ref{prop : Divergence of P}, which coincides essentially with the {\it Kullback-Leibler divergence} for a mixture family of probability density functions on a finite set. 
Theorem~\ref{prop:M_P to P} provides a characterization as a zero-sum condition (\ref{eq:zerosum}) for a compact toric K\"ahler manifold to become a torification of such a family. 
In Section~5 we investigate the boundary behavior of the dually flat space arising from Delzant polytope, and give proofs of main results, Theorem~\ref{thm :  conti divergence} and Theorem~\ref{thm:Pythagorean theorem}.  
In Section~6 we exhibit two examples. 
In the first example we demonstrate the verification of our main theorems using the isosceles triangle and $\bbC P^2$.  
The second example is a non-compact case, which suggests that our main results can be extended to non-compact case. 

\medskip

\noindent
{\bf Acknowledgement.}
The author is indebted to Koichi Tojo for introducing him to the reference \cite{Molitor} and providing valuable comments.  
He is also grateful to Mao Nakamura and Naomichi Nakajima for fruitful discussions on dually flat spaces. 
He has gratitude to Mathieu Molitor for giving useful comments on the draft version of this paper. 
The author is partly supported by Grant-in-Aid for Scientific Research (C) 18K03288.

 \section{K\"ahler structure on toric symplectic manifold}
 In this section we give brief review on K\"ahler metrics on toric symplectic manifold in terms of Delzant polytopes and 
 symplectic potentials.

\subsection{Delzant polytope and Delzant construction}
For the contents of this subsection consult the book \cite{Silva} for example. 

\begin{defn}\label{defn:toric symp mfd}
A {\it toric symplectic manifold} $(M,\omega, \rho, \mu)$ is a data consisting of 
\begin{itemize}
\item a connected symplectic manifold $(M,\omega)$ of dimension $2n$, 
\item a homomorphism $\rho$ from the $n$-dimensional torus $T$ to the group of symplectomorphisms of $M$ which gives a Hamiltonian action of $T$ on $M$, and 
\item a moment map $\mu:M\to \bbR^n=({\rm Lie}(T))^*$. 
\end{itemize}
\end{defn}
In this paper, unless otherwise stated, we assume that toric manifolds are compact without boundary. 
However, in Section 6, we will consider non-compact examples.
For simplicity, we often denote the tuple $(M,\omega, \rho,\mu)$ as simply $M$. 
It is well known after \cite{Delzant} that the image $\mu(M)$ is a convex polytope in $\bbR^n$, which is called a {\it Delzant polytope} defined below. 

\begin{defn}\label{def:delzantpoly}
A convex polytope $P$ in $\R^n$ is called a {\it Delzant polytope} if $P$ satisfies the following conditions : 
\begin{itemize}
\item $P$ is simple, that is, each vertex of $P$ has exactly $n$ edges. 
\item $P$ is rational, that is, at each vertex all directional vectors of edges can be taken as integral vectors in $\bbZ^n$. 
\item $P$ is smooth, that is, at each vertex we can take integral directional vectors of edges as a $\bbZ$-basis of $\bbZ^n$ in $\bbR^n$. 
\end{itemize}
\end{defn}

Consider the group of integral affine transformations ${\rm AGL}(n,\bbZ)$. 
Namely ${\rm AGL}(n,\bbZ)={\rm GL}(n,\bbZ)\times \bbR^n$ as a set and the group structure is given by 
\[
(A_1, v_1)\cdot(A_2, v_2):=(A_1A_2, A_1v_2+v_1)
\]for each $(A_1, v_1), (A_2, v_2)\in {\rm AGL}(n,\bbZ)$. 
This group ${\rm AGL}(n,\bbZ)$ naturally acts on the set of all $n$-dimensional Delzant polytopes. 
The famous Delzant construction gives the inverse correspondence of $M\mapsto \mu(M)$ for a compact toric symplectic manifold $M$ without boundary. 
To give the precise statement we clarify the equivalence relation. 

\begin{defn}
Two toric symplectic manifolds $(M_1, \omega_1, \rho_1,\mu_1)$ and $(M_2, \omega_2, \rho_2, \mu_2)$ are {\it weakly isomorphic} if there exist a diffeomorphism $f:M_1\to M_2$ and a group isomorphism $\phi:T\to T$ such that 
\[
f^*\omega_2=\omega_1 \ {\rm and} \ \rho_1(g)(x)=\rho_2(\phi(g))(f(x)) \ {\rm for\ all }  \ (g,x)\in T\times M_1. 
\]
\end{defn}
\noindent 
In \cite{KarshonKessler} the equivalence relation \lq\lq weakly isomorphic \rq\rq \ is called 
just \lq\lq  equivalent \rq\rq. In this paper we follow the terminology in \cite{PPRS}.
 
\begin{thm}[\cite{Delzant},\cite{KarshonKessler}]
The Delzant construction gives a bijective correspondence between the set of all ${\rm AGL}(n,\bbZ)$-congruence classes of Delzant polytopes and the set of all weak isomorphism classes of $2n$-dimensional compact toric symplectic manifolds without boundary. 
\end{thm}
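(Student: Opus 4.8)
The statement is the classical Delzant theorem, and the plan is to construct both assignments of the stated bijection and then show they are mutually inverse and compatible with the two equivalence relations. The map $M \mapsto \mu(M)$ is the easier one to analyze; the substance lies in the \emph{Delzant construction}, which produces a compact toric symplectic manifold $M_P$ from a Delzant polytope $P$ by symplectic reduction, together with the uniqueness statement that $M_P$ is, up to weak isomorphism, the \emph{only} $2n$-dimensional compact toric symplectic manifold with moment image $P$.

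First I would set up the Delzant construction. Writing $P=\bigcap_{i=1}^{d}\{x\in\bbR^n:\langle x,u_i\rangle\ge\lambda_i\}$ with $u_i\in\bbZ^n$ the primitive inward normals to the $d$ facets, I define the linear surjection $\pi:\bbR^d\to\bbR^n$ by $e_i\mapsto u_i$. The smoothness condition in Definition~\ref{def:delzantpoly} guarantees that $\pi$ carries $\bbZ^d$ onto $\bbZ^n$, so $\pi$ descends to a surjection of tori $T^d\to T$ whose kernel is a subtorus $N\subset T^d$ of dimension $d-n$. On $\bbC^d$ with its standard symplectic form and the standard $T^d$-action, whose moment map I denote $\phi(z)=\tfrac12(|z_1|^2,\dots,|z_d|^2)+c$ for a constant vector determined by the $\lambda_i$, I then pass to the induced moment map $\phi_N$ for the subtorus $N$ and form the symplectic reduction $M_P:=\phi_N^{-1}(0)/N$ at level zero. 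The key point, again forced by the smoothness condition at the vertices of $P$, is that $0$ is a regular value of $\phi_N$ and that $N$ acts freely on $\phi_N^{-1}(0)$; granting this, $M_P$ is a compact symplectic manifold of dimension $2d-2(d-n)=2n$ carrying a residual Hamiltonian action of $T=T^d/N$, and a direct computation identifies its moment image with $P$. Reordering the facets, or replacing the defining data of $P$ by an ${\rm AGL}(n,\bbZ)$-congruent polytope, changes this construction only by a weak isomorphism, so $[P]\mapsto[M_P]$ is well defined on congruence classes.

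For the converse direction I would start from a compact toric symplectic manifold $M$ and invoke the Atiyah--Guillemin--Sternberg convexity theorem to see that $\mu(M)$ is a convex polytope, namely the convex hull of the images of the fixed points of the $T$-action. To check that $\mu(M)$ satisfies the three conditions of Definition~\ref{def:delzantpoly}, I would apply the equivariant local normal form for a Hamiltonian torus action near a fixed point: in suitable coordinates the action is linear with integral weights forming a $\bbZ$-basis, and this translates precisely into simplicity, rationality, and smoothness of the polytope at the corresponding vertex. Thus $M\mapsto\mu(M)$ lands in Delzant polytopes and is ${\rm AGL}(n,\bbZ)$-equivariant, giving a well-defined assignment on weak isomorphism classes.

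The main obstacle is the \emph{uniqueness} half: showing that any compact toric symplectic manifold $M$ with $\mu(M)=P$ is weakly isomorphic to the model $M_P$, which simultaneously yields injectivity and that the two assignments are mutually inverse. Over the interior of $P$ the moment map exhibits $\mu^{-1}(\mathrm{int}\,P)$ as a principal $T$-bundle over a contractible base, hence trivial, and action--angle coordinates identify it equivariantly and symplectically with the corresponding piece of $M_P$; the difficulty is to extend this identification across the lower-dimensional faces, where the torus orbits degenerate. I would handle this using the local normal form along each facet to match the two manifolds stratum by stratum, patch the resulting symplectomorphisms equivariantly, and then complete the argument with an equivariant Moser-type deformation, interpolating the two symplectic forms while keeping the shared moment map fixed. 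Controlling this extension at the boundary strata of $P$ is the crux of the proof.
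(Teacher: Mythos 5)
First, a point of comparison: the paper does not actually prove this theorem. It is quoted from \cite{Delzant} and \cite{KarshonKessler}, and what follows the statement in the paper is only a review of the Delzant construction itself (the reduction $M_P=(\iota^*\circ\tilde\mu)^{-1}(0)/H$), with the regularity and freeness facts stated without proof and the entire uniqueness question deferred to the references. Your existence half reproduces that construction accurately --- the surjection $\bbR^d\to\bbR^n$, $e_i\mapsto u_i$, the kernel subtorus, reduction at level zero, and the role of the smoothness condition at the vertices in making $0$ a regular value with free action --- and your well-definedness remarks (reordering facets and ${\rm AGL}(n,\bbZ)$-congruence change $M_P$ only by weak isomorphism) together with the convexity-theorem/local-normal-form argument that $\mu(M)$ is a Delzant polytope are correct and standard.

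The genuine gap is in the uniqueness half, which you yourself label ``the crux'': you assert that the action-angle identification over the interior of $P$ can be extended across the lower-dimensional faces ``stratum by stratum,'' that the resulting local symplectomorphisms can be ``patched equivariantly,'' and that an equivariant Moser deformation finishes. But the patching is exactly the hard point, and naming it is not an argument. Local identifications built from normal forms over an open cover of $P$ will not agree on overlaps; their discrepancies form a \v{C}ech $1$-cocycle with values in the sheaf of moment-map-preserving equivariant symplectomorphisms, and the proof in \cite{Delzant} identifies this automorphism sheaf concretely (fiberwise translations $\theta\mapsto\theta+dh(x)$ governed by exact and integral data) and then uses convexity, hence contractibility, of $P$ to show the relevant cohomology vanishes, so that the local identifications can be corrected to glue. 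Without this cohomological (or some equivalent) local-to-global argument, ``patch equivariantly'' presupposes precisely what must be proved. By contrast, the Moser step you invoke is comparatively harmless: once an equivariant diffeomorphism matching the moment maps exists, the difference of the two forms is basic and exact and the linear interpolation stays nondegenerate. So your proposal is a correct outline of the classical proof, but its decisive step --- the gluing across the boundary strata of $P$ --- is missing rather than merely difficult.
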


We give a brief review of the Delzant construction here. 
Let $P$ be an $n$-dimensional Delzant polytope and
\begin{equation}\label{l^r}
l^{(r)}(\xi):=\xi\cdot\nu^{(r)} + \lambda^{(r)}=0 \quad (r=1,\cdots, N)
\end{equation}
a system of defining affine equations on $\bbR^n$ of facets of $P$, 
where each $\nu^{(r)}\in\bbZ^n$ being inward pointing normal vector of $r$-th facet, $\cdot$ is the Euclidean inner product and $N$ is the number of facets of $P$. In other words, $P$ can be described as 
\[
P=\bigcap_{r=1}^N\{\xi\in\bbR^n \ | \ l^{(r)}(\xi)\geq 0\}. 
\]
 We may assume that each $\nu^{(r)}$ is primitive and the set of all $\nu^{(r)}$'s at each vertex form a $\bbZ$-basis of $\bbZ^n$. 
Here an integral vector $u$ in $\bbR^n$ is called {\it primitive} if $u$ cannot be described as $u=ku'$ for another integral vector $u'$ and $k\in\bbZ$ with $|k|>1$. 
Consider the standard Hamiltonian action of the $N$-dimensional torus $T^N$ on $\bbC^N$ with the moment map  
\[
\tilde\mu:\bbC^N\to (\bbR^N)^*={\rm Lie}(T^N)^*, \ (z_1, \ldots, z_N)\mapsto  (|z_1|^2, \ldots, |z_N|^2)+(\lambda^{(1)}, \ldots, \lambda^{(N)}). 
\]
Let $\tilde\pi:\bbR^N\to \bbR^n$ be the linear map defined by $e_r\mapsto\nu^{(r)}$, where $e_r$ ($r=1,\ldots, N$) is the 
$r$-th standard basis vector of $\bbR^N$. 
Note that $\tilde\pi$ induces a surjection $\tilde\pi=\tilde\pi|_{\bbZ^N}:\bbZ^N\to\bbZ^n$ between the standard lattices by the 
last condition in Definition~\ref{def:delzantpoly}, and hence it induces a surjective homomorphism between tori,  still denoted by $\tilde\pi$, 
\[
\tilde\pi:T^N=\bbR^N/\bbZ^N\to T=\bbR^n/\bbZ^n. 
\]
Let $H$ be the kernel of $\tilde\pi$ which is an $(N-n)$-dimensional subtorus of $T^N$ and 
$\frakh$ its Lie algebra.  
We have an exact sequence 
\[
1\to  H\stackrel{\iota}{\to} T^N\stackrel{\tilde\pi}{\to} T\to 1
\]
and the associated exact sequence among Lie algebras 
\[
0 \to  \frakh\stackrel{\iota}{\to} \bbR^N\stackrel{\tilde\pi}{\to} \bbR^n\to 0, 
\]
where $\iota$ is the inclusion map from $H$ (resp. $ \frakh$) to $T^N$ (resp. $\R^N$). 
By taking the dual of the exact sequence among Lie algebras we have the exact sequence
\[
0 \to (\bbR^n)^* \stackrel{\tilde\pi^*}{\to} (\bbR^N)^*\stackrel{\iota^*}{\to}  \frakh^*\to 0. 
\]
Then the composition $\iota^*\circ\tilde\mu:\bbC^N\to \frakh^*$ is the associated moment map 
of the action of $H$ on $\bbC^N$. 
It is known that $(\iota^*\circ\tilde\mu)^{-1}(0)$ is a compact submanifold of $\bbC^N$ and 
$H$ acts freely on it.  
We obtain the desired symplectic manifold, denoted as $M_P$, defined as the quotient $(\iota^*\circ\tilde\mu)^{-1}(0)/H$, equipped with a natural Hamiltonian $T^N/H=T$-action.  
Its moment map image precisely coincides with $P$. 
From the viewpoint of the moment map, it is natural to regard $P$ as a subset of ${\rm Lie}(T)^*$ and 
each normal vector $\nu^{(r)}$ as an element in ${\rm Lie}(T)^{**}={\rm Lie}(T)$. 

\subsection{The Guillemin metric and Abreu's theory}
Note that the standard flat K\"ahler structure on $\bbC^N$ induces a K\"ahler structure on $M_P$. 
The associated Riemannian metric on $M_P$ is called the {\it Guillemin metric}. 
An explicit description of the Guillemin metric is known.  
We give the description following \cite{Abreu}. 
Consider the smooth function 
\begin{equation}\label{g_P}
\vphi_P:=\frac{1}{2}\sum_{r=1}^Nl^{(r)}\log l^{(r)} : P^{\circ}\to \bbR, 
\end{equation}
where $P^{\circ}$ is the interior of $P$. 
It is known that $M_P^{\circ}:=\mu_P^{-1}(P^{\circ})$  is an open dense subset of $M_P$ which coincides with the union of free $T^n$-orbits. 
By taking a Lagrangian section $P^\circ \to M_P^\circ$ we have a trivialization $M_P^{\circ}\cong P^{\circ}\times T$ as a principal torus bundle. 
Under this identification $\omega_P|_{M_P^{\circ}}$ can be described as 
\[
\omega_P|_{M_P^{\circ}}=d\xi\wedge dt=\sum_{i=1}^nd\xi_i\wedge dt_i
\]using the standard coordinate $(\xi,t)=(\xi_1, \ldots, \xi_n, t_1, \ldots, t_n)\in P^{\circ}\times T$. 
Here we regard $T=(S^1)^n$ and $S^1=\bbR/\bbZ$. The coordinate on $M_P^{\circ}$ induced from $(\xi,t)\in P^{\circ}\times T$ is called the {\it symplectic coordinate} on $M_{P}$. 

\begin{thm}[\cite{Guillemin}]\label{Guillemins metric}
Under the symplectic coordinates $(\xi,t)\in P^{\circ}\times T\cong M_P^{\circ}\subset M_P$, the Guillemin metric can be described as 
\[
\begin{pmatrix}
G_P & 0 \\
0 & G_P^{-1}
\end{pmatrix}, 
\]where $\displaystyle G_P:={\rm Hess}(\vphi_P)=\left(\frac{\partial^2 \vphi_P}{\partial \xi_i\partial \xi_j}\right)_{i,j=1,\ldots, n}$ is the Hessian of $\vphi_P$. 
\end{thm}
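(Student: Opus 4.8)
The plan is to realize the Guillemin metric as the Kähler metric obtained by symplectic (Kähler) reduction of the flat structure on $\bbC^N$, and then to rewrite it in the action--angle coordinates $(\xi,t)$ by means of a Legendre transform. First I would recall that the Hamiltonian $H$-action on $\bbC^N$ is holomorphic and isometric for the flat Kähler metric, so that the quotient $M_P=(\iota^*\circ\tilde\mu)^{-1}(0)/H$ inherits a canonical $T$-invariant Kähler structure whose Riemannian metric is, by definition, the Guillemin metric. Working on the open dense orbit $M_P^{\circ}\cong P^{\circ}\times T$, where by the excerpt $\omega_P=d\xi\wedge dt$, I would introduce, alongside $(\xi,t)$, the logarithmic complex coordinates $(u,t)$ in which the induced complex structure is standard and the $T$-invariant Kähler form is $\omega_P=2i\,\partial\bar\partial F$ for a strictly convex Kähler potential $F=F(u)$ depending only on $u$.

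The heart of the matter is the Legendre duality between the two coordinate systems. Since the moment map of a $T$-invariant Kähler structure is the gradient of its Kähler potential, one has $\xi=\nabla_u F$, so that
\[
\vphi_P(\xi)=\langle \xi,u\rangle-F(u),\qquad \xi=\nabla_u F,\qquad u=\nabla_\xi\vphi_P,
\]
where $\vphi_P$ is the Legendre transform of $F$, and consequently ${\rm Hess}_\xi(\vphi_P)=({\rm Hess}_u F)^{-1}$. In the complex coordinates $(u,t)$ a $T$-invariant Kähler metric with potential $F(u)$ has the block-diagonal form ${\rm diag}({\rm Hess}_u F,\,{\rm Hess}_u F)$ on the $du$- and $dt$-parts. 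Performing the change of variables $du=({\rm Hess}_u F)^{-1}d\xi=G_P\,d\xi$ on the first block while leaving $t$ fixed, and using that $G_P={\rm Hess}(\vphi_P)=({\rm Hess}_u F)^{-1}$ is symmetric, the $du$-block becomes $G_P({\rm Hess}_u F)G_P=G_P$ and the $dt$-block becomes ${\rm Hess}_u F=G_P^{-1}$. This yields precisely the asserted form ${\rm diag}(G_P,\,G_P^{-1})$.

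It then remains to identify the Legendre-dual potential $\vphi_P$ with the explicit function $\tfrac12\sum_{r=1}^N l^{(r)}\log l^{(r)}$, which I would do by tracking the potential through the reduction. On $\bbC^N$ the $T^N$-moment coordinates are $x_r=|z_r|^2+\lambda^{(r)}$, which on the zero level set of $\iota^*\circ\tilde\mu$ agree with $l^{(r)}$; the symplectic potential of the flat structure on $\bbC^N$ in these coordinates is $\tfrac12\sum_r x_r\log x_r$, computed factor by factor from the one-variable model $\bbC$ (whose symplectic potential is the Legendre dual $x\log x$ of the flat Kähler potential, up to normalization). Pulling this back along the affine identification $\xi\mapsto(l^{(1)}(\xi),\dots,l^{(N)}(\xi))$ determined by the exact sequence $0\to(\bbR^n)^*\xrightarrow{\tilde\pi^*}(\bbR^N)^*\xrightarrow{\iota^*}\frakh^*\to 0$ gives $\tfrac12\sum_r l^{(r)}(\xi)\log l^{(r)}(\xi)=\vphi_P$, the compatibility resting on the fact that $\tilde\pi^*$ is the adjoint of the quotient $\tilde\pi$ implementing the reduction.

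The main obstacle I anticipate is this last identification: making the descent of the symplectic potential rigorous demands careful bookkeeping of the normalization constants and of the affine shifts $\lambda^{(r)}$, so that the flat potential on $\bbC^N$ restricts correctly to the fibre over $0$ and descends to $\vphi_P$ with no spurious factors (these are exactly the factors that must conspire to produce the clean block ${\rm diag}(G_P,G_P^{-1})$ rather than a rescaling of it). A secondary point I would verify is the strict convexity of $\vphi_P$ on $P^{\circ}$, equivalently the positive-definiteness and invertibility of $G_P$ needed both for the Legendre transform to be a diffeomorphism and for $G_P^{-1}$ to make sense; this I would deduce from the defining inequalities $l^{(r)}>0$ on $P^{\circ}$ together with the condition that the normals $\nu^{(r)}$ at each vertex form a $\bbZ$-basis of $\bbZ^n$.
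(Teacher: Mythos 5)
The paper itself contains no proof of this theorem: it is imported verbatim from \cite{Guillemin} (see also \cite{Abreu}), so there is no internal argument to compare yours against. Judged on its own merits, your proposal reconstructs exactly the route of that original literature: K\"ahler reduction of the flat structure on $\bbC^N$, logarithmic holomorphic coordinates $(u,t)$ on the open orbit, and the Legendre duality $\xi=\nabla_u F$, $u=\nabla_\xi\vphi_P$, ${\rm Hess}_\xi(\vphi_P)=({\rm Hess}_u F)^{-1}$, which converts the block form ${\rm diag}({\rm Hess}_u F,\,{\rm Hess}_u F)$ in complex coordinates into ${\rm diag}(G_P,\,G_P^{-1})$ in symplectic coordinates. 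That central computation in your second paragraph is correct, and it is the same duality the paper itself exploits in Subsection~\ref{subsec:Torification of Delzant polytope} to pass between $(P^\circ,{\rm Hess}(\vphi),\nabla^{\rm flat})$ and $(\bbR^n,{\rm Hess}(\psi),\nabla^{\rm flat})$. Your secondary worry about positive definiteness is also easily settled: ${\rm Hess}(\vphi_P)=\frac{1}{2}\sum_r (l^{(r)})^{-1}\nu^{(r)}(\nu^{(r)})^{T}$ is positive definite on $P^\circ$ because the $\nu^{(r)}$ span $\bbR^n$.

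The one place where your sketch falls short of a proof is the step you flag yourself: identifying the symplectic potential of the reduced metric with $\frac{1}{2}\sum_r l^{(r)}\log l^{(r)}$. As written, the claim that the flat potential $\frac{1}{2}\sum_r x_r\log x_r$ on $\bbC^N$ ``restricts to the zero level set and descends'' is an assertion of the theorem rather than an argument: what must actually be proved is that symplectic potentials are compatible with K\"ahler reduction, i.e.\ that the Legendre dual of the \emph{reduced} K\"ahler potential equals the pullback of $\frac{1}{2}\sum_r x_r\log x_r$ under the affine slice $\xi\mapsto(l^{(1)}(\xi),\ldots,l^{(N)}(\xi))$. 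This is the genuine content of Guillemin's computation (carried out by writing the reduced K\"ahler potential on the open orbit explicitly and Legendre-transforming it, and treated in general in the reduction formalism of Apostolov--Calderbank--Gauduchon type arguments), and it is more than bookkeeping of the constants $\lambda^{(r)}$. Relatedly, with the paper's stated normalization $\tilde\mu(z)=(|z_1|^2,\ldots,|z_N|^2)+\lambda$, the identity $x_r=l^{(r)}(\xi)$ on the zero level set only holds after the sign and scaling conventions are fixed consistently (with moment coordinate $\frac{1}{2}|z_r|^2$ the one-variable model $\bbC$ has symplectic potential exactly $\frac{1}{2}x\log x$); these are precisely the factors that decide whether one obtains ${\rm diag}(G_P,\,G_P^{-1})$ on the nose or only up to rescaling, so they cannot be waved away.
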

Furthermore the complete classification of torus invariant K\"ahler structures is known. 
To explain it we introduce a class of functions on $P$. 
\begin{defn}
A smooth function $\vphi:P^\circ\to \bbR$ is called a {\it symplectic potential} if the following conditions are satisfied. 
\begin{itemize}
\item $G:={\rm Hess}(\vphi)$ is positive definite. 
\item $\vphi-\vphi_P$ has a smooth extension to an open neighborhood of $P$. 
\item $\displaystyle\det(G)\left(\prod_{r=1}^Nl_r\right)$ is positive and has a smooth extension to an open neighborhood of $P$. 
\end{itemize}
\end{defn}
The first condition of the symplectic potential tells us that 
\[
\begin{pmatrix}
G& 0 \\
0 & G^{-1}
\end{pmatrix} \quad ({\rm resp.}  \ G)
\]induces a torus invariant K\"ahler (resp. Riemannian) structure on $M_P^\circ$ (resp. $P^\circ$). 
The following theorem asserts the converse and provides a classification of torus invariant K\"ahler structures. 
\begin{thm}[\cite{Abreu}, \cite{ApostolovCalderbankGauduchon}]\label{thm : toric Kahler metric}
For any symplectic potential $\vphi:P^\circ\to \bbR$ and $G:={\rm Hess}(\vphi)$ the Riemannian metric
\begin{equation}\label{eq : local description of metric}
\begin{pmatrix}
G& 0 \\
0 & G^{-1}
\end{pmatrix} 
\end{equation}on $M_P^\circ$ defines a  torus invariant K\"ahler structure on $M_P$. 
Conversely for any compact toric K\"ahler manifold $M$ with the moment map $\mu:M\to \bbR^n$ and its Riemannian metric $g$ there exists a symplectic potential $\vphi:\mu(M^\circ)\to \bbR$ such that $g|_{M^\circ}$ can be represented as (\ref{eq : local description of metric}) for $G={\rm Hess}(\vphi)$. 
\end{thm}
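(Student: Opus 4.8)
The statement contains two assertions---a construction and its converse---which I would prove in turn.

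\emph{Construction of the metric from a symplectic potential.} The plan is to produce the K\"ahler structure on the open dense part $M_P^\circ\cong P^\circ\times T$ by a Legendre transform and then check that it extends smoothly across the collapsing strata of $M_P$. Since $G=\mathrm{Hess}(\vphi)$ is positive definite, the gradient $\xi\mapsto x:=\nabla\vphi(\xi)$ is a diffeomorphism of $P^\circ$ onto a convex open subset of $\bbR^n$, whose inverse is $\nabla\psi$ for the Legendre dual $\psi$ of $\vphi$, and Legendre reciprocity gives $\mathrm{Hess}_x\psi=G^{-1}$ together with $dx=G\,d\xi$. I would then set $w_j=x_j+\sqrt{-1}\,t_j$ and verify, from $g(\cdot,\cdot)=\omega_P(\cdot,J\cdot)$, that the induced almost complex structure $J$ satisfies $J^*(dx_j+\sqrt{-1}\,dt_j)=\sqrt{-1}(dx_j+\sqrt{-1}\,dt_j)$; thus the $w_j$ are holomorphic, $g=\sum_{i,j}(G^{-1})_{ij}(dx_i\,dx_j+dt_i\,dt_j)$ is Hermitian with K\"ahler potential $\psi$, and $(\omega_P,J)$ is K\"ahler on $M_P^\circ$. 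The genuinely nontrivial point is the extension to $M_P\setminus M_P^\circ$: here I would compare with the model furnished by the Guillemin potential $\vphi_P$ of (\ref{g_P}), whose metric is smooth on all of $M_P$ because it descends from the flat K\"ahler metric on $\bbC^N$ by the Delzant reduction (Theorem~\ref{Guillemins metric}). Writing $\vphi=\vphi_P+f$, condition (2) makes $f$ smooth near $P$, so $G$ differs from the Guillemin data by a smooth Hessian, while condition (3)---positivity and smoothness of $\det(G)\prod_r l^{(r)}$ near $P$---controls $G^{-1}$, forcing it to vanish to exactly first order in $l^{(r)}$ in the direction of the circle collapsing along the $r$-th facet. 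A local analysis near each facet, and near each face of higher codimension, then matches $g$ and $J$ to the smooth Guillemin model up to smooth corrections, giving the extension.

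\emph{The converse.} Starting from a compact toric K\"ahler manifold $(M,\omega,\rho,\mu)$ with metric $g$ and complex structure $J$, the action--angle theorem identifies $M^\circ=\mu^{-1}(P^\circ)$ with $P^\circ\times T$ so that $\omega=d\xi\wedge dt$, as in the symplectic coordinates of Theorem~\ref{Guillemins metric}. Because the $T$-action preserves $\omega$ and $g$ it is holomorphic, so one obtains torus-invariant holomorphic coordinates $w_j=x_j+\sqrt{-1}\,t_j$ with $x=x(\xi)$ depending on the moment coordinates alone; in the corresponding K\"ahler potential $\psi(x)$ the moment map is recovered as $\xi=\nabla_x\psi$, so $\partial\xi/\partial x=\mathrm{Hess}_x\psi$ is symmetric and positive definite. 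Setting $G:=\partial x/\partial\xi=(\mathrm{Hess}_x\psi)^{-1}$, the metric takes the block form $\begin{pmatrix}G&0\\0&G^{-1}\end{pmatrix}$ in the coordinates $(\xi,t)$. Symmetry of $\partial x/\partial\xi$ means $\sum_i x_i\,d\xi_i$ is closed, so the Poincar\'e lemma on the convex $P^\circ$ yields $\vphi$ with $x=\nabla\vphi$, i.e. $G=\mathrm{Hess}(\vphi)$. Finally I would check that $\vphi$ is a symplectic potential: positivity of $G$ is automatic, and conditions (2) and (3) are precisely the translation of the smoothness of $g$ on the whole of $M$, read off once more by comparison with the Guillemin model near each facet and vertex.

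\emph{Main obstacle.} In both directions the crux is the same boundary-regularity analysis: translating smoothness of the tensors on $M_P$, where the torus fibration degenerates over $\partial P$, into the controlled singular behaviour of $G$ and $G^{-1}$ near $\partial P$, and conversely. The three defining conditions of a symplectic potential are engineered exactly so that the comparison with Guillemin's explicit model metric closes, and the detailed matching near the codimension-one (facet) and higher-codimension (edge and vertex) strata is where the real work lies.
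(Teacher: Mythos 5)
The paper does not actually prove this statement: Theorem~\ref{thm : toric Kahler metric} is quoted as a known result from \cite{Abreu} and \cite{ApostolovCalderbankGauduchon}, so your attempt can only be measured against those references. Your overall strategy does match theirs: Legendre duality on the open dense orbit $M_P^\circ\cong P^\circ\times T$ (holomorphic coordinates $w_j=x_j+\sqrt{-1}\,t_j$ with $x=\nabla\vphi$, Legendre dual $\psi$ as K\"ahler potential, metric blocks $G$ and $G^{-1}$), and, for the converse, recovery of $\vphi$ from the symmetry of $\partial x/\partial\xi$ via the Poincar\'e lemma on the convex set $P^\circ$. These interior computations are correct and are the same ones the paper itself uses later in Section~4.1.

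However, there is a genuine gap precisely at what you call the ``main obstacle.'' The equivalence between the three defining conditions of a symplectic potential and smoothness of $(g,J)$ across $M_P\setminus M_P^\circ$ is not a routine matching argument --- it is the entire content of the theorem and the actual contribution of \cite{ApostolovCalderbankGauduchon}; your proposal asserts it (``the conditions are engineered exactly so that the comparison closes'') rather than proves it. Concretely: (i) you claim condition (3) ``controls $G^{-1}$, forcing it to vanish to exactly first order in $l^{(r)}$,'' but positivity and smoothness of the scalar $\det(G)\prod_r l^{(r)}$ does not by itself control the full matrix $G^{-1}$; one must show, using condition (2) as well, that $G^{-1}$ extends smoothly to $\partial P$ with kernel exactly $\bbR\,\nu^{(r)}$ along the $r$-th facet and with the correctly normalized first-order derivative in the direction $\nu^{(r)}$ --- this is the precise boundary condition under which the collapsing circle closes up smoothly. (ii) A facet-by-facet comparison does not automatically close at faces of codimension at least two; Abreu and ACG work in the explicit local models furnished by the Delzant construction ($\bbC^{n-k}\times(\bbC^\times)^{k}$ near a codimension-$(n-k)$ face, $\bbC^n$ near a vertex), a setup your sketch never introduces. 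There is also a smaller unproved step in the converse: you take for granted that an invariant integrable compatible $J$ admits holomorphic coordinates of the form $w_j=x_j(\xi)+\sqrt{-1}\,t_j$ in the given angle variables. A priori an invariant $\omega$-compatible almost complex structure has blocks mixing the $\partial/\partial\xi$ and $\partial/\partial t$ directions; eliminating them requires the integrability (Nijenhuis) computation together with the freedom to change the Lagrangian section, which is a lemma in \cite{Abreu}, not a formality.
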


 \section{Dually flat spaces and torifications}
 In this section we prepare several basic notions of dually flat spaces. 
 \subsection{Dually flat spaces and Hesse structures}
 For the contents of this and next subsections consult the book \cite{Shima} for example. 
 \begin{defn}\label{defn : dually flat space}
 A {\it dually flat space} is a triplet $(U,h,\nabla)$ consisting of 
 \begin{itemize}
 \item a smooth manifold $U$, 
 \item a torsion free flat affine connection $\nabla$ of $U$, and 
 \item a Riemannian metric $h$ of $U$ such that $\nabla h$ is totally symmetric. 
 \end{itemize}
 \end{defn}
 
 \begin{Rem}
 For any affine connection $\nabla$ of a Riemannian manifold $(U,h)$, the {\it dual connection $\nabla^*$} of $\nabla$ is determined by the equality 
 \[
 Xh(Y,Z)=h(\nabla_XY,Z)+h(Y, \nabla^*_XZ)
 \]for any vector fields $X, Y, Z$ on $U$.  
 Then $(U,h,\nabla)$ is dually flat if and only if $(U,h,\nabla^*)$ is so. 
 \end{Rem}
 
 \begin{defn}\label{defn : affine coord}
 Let $(U,h,\nabla)$ be an $n$-dimensional dully flat space, and let $u=(u_1,\ldots, u_n)$ and $v=(v_1,\ldots, v_n)$ be coordinate systems on $U$. The pair $(u,v)$ is called a {\it pair of dual affine coordinate systems} if the following conditions are satisfied. 
 \begin{itemize}
 \item $u$ is a $\nabla$-affine coordinate, i.e., $\nabla_{\frac{\partial}{\partial u_i}}\frac{\partial}{\partial u_j}=0$ for all $i,j=1,\ldots, n$. 
 \item $v$ is a $\nabla^*$-affine coordinate, i.e., $\nabla^*_{\frac{\partial}{\partial v_i}}\frac{\partial}{\partial v_j}=0$ for all $i,j=1,\ldots, n$. 
 \item $h\left(\frac{\partial}{\partial u_i}, \frac{\partial}{\partial v_j}\right)=\delta_{ij}$ for all $i, j=1,\ldots, n$. 
 \end{itemize}
 \end{defn}
 
 The notion of dual affine coordinate systems can be defined for a local coordinate system of $U$, however, our examples in the subsequent sections can be described by global coordinate.
 
 \begin{prop}\label{prop : potential}
Let $(U,h,\nabla)$ be an $n$-dimensional dually flat space and $u:U\to \R^n$ a $\nabla$-affine coordinate system. Suppose that $U$ is connected, simply connected and $u(U)\subset \bbR^n$ is convex. Then there exists a smooth function $\varphi :u(U)\to \R$ such that the following conditions are satisfied. 
\begin{itemize}
\item $\displaystyle\frac{\partial^2\varphi}{\partial u_i\partial u_j}=h\left(\frac{\partial}{\partial u_i}, \frac{\partial}{\partial u_j}\right)$
\item The pair $(u,v)$ is a pair of dual affine coordinate systems, where $v=(v_1, \ldots,
v_n)={\rm grad}(\vphi)=\left(\frac{\partial\vphi}{\partial u_1}, \ldots, \frac{\partial\vphi}{\partial u_n}\right)$. 
\end{itemize}
 \end{prop}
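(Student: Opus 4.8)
The plan is to work entirely in the $\nabla$-affine chart $u$, where $\nabla$ has vanishing Christoffel symbols, and to read the hypothesis that $\nabla h$ is totally symmetric as an integrability condition. Write $g_{ij} := h\!\left(\frac{\partial}{\partial u_i}, \frac{\partial}{\partial u_j}\right)$ for the components of the metric in this chart. Since $\nabla_{\partial/\partial u_k}\frac{\partial}{\partial u_i} = 0$, the components of $\nabla h$ are simply $\frac{\partial g_{ij}}{\partial u_k}$, so total symmetry of $\nabla h$ says precisely that $\frac{\partial g_{ij}}{\partial u_k}$ is symmetric in all three indices $i,j,k$ (symmetry in $i,j$ being automatic from symmetry of $h$).

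First I would construct the dual coordinate $v$. For each fixed $i$ the $1$-form $\omega_i := \sum_j g_{ij}\,du_j$ is closed, because the components of $d\omega_i$ are $\frac{\partial g_{ij}}{\partial u_k} - \frac{\partial g_{ik}}{\partial u_j}$, which vanish by the symmetry just noted. As $U$ is simply connected, Poincar\'e's lemma yields functions $v_i$ with $dv_i = \omega_i$, i.e.\ $\frac{\partial v_i}{\partial u_j} = g_{ij}$; concretely, since $u(U)$ is convex one may simply integrate $\omega_i$ along straight segments from a fixed base point. Next, the $1$-form $\eta := \sum_i v_i\,du_i$ is closed as well, since $\frac{\partial v_i}{\partial u_j} = g_{ij} = g_{ji} = \frac{\partial v_j}{\partial u_i}$, so integrating once more produces $\varphi$ with $\frac{\partial \varphi}{\partial u_i} = v_i$. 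Then $\frac{\partial^2 \varphi}{\partial u_i \partial u_j} = \frac{\partial v_i}{\partial u_j} = g_{ij}$, which is the first asserted identity, and by construction $v = \mathrm{grad}(\varphi)$.

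It then remains to verify the three conditions of Definition~\ref{defn : affine coord}. The Jacobian $\left(\frac{\partial v_i}{\partial u_j}\right) = (g_{ij})$ is positive definite, hence invertible, so $v$ is a coordinate system (in fact a global one, as $\mathrm{grad}(\varphi)$ of the strictly convex function $\varphi$ is injective on the convex set $u(U)$), with inverse Jacobian $\frac{\partial u_k}{\partial v_j} = g^{kj}$, where $(g^{kj})$ is the inverse matrix of $(g_{ij})$. The chain rule gives $\frac{\partial}{\partial v_j} = \sum_k g^{kj}\frac{\partial}{\partial u_k}$, whence
\[
h\!\left(\frac{\partial}{\partial u_i}, \frac{\partial}{\partial v_j}\right) = \sum_k g^{kj}\, h\!\left(\frac{\partial}{\partial u_i}, \frac{\partial}{\partial u_k}\right) = \sum_k g^{kj} g_{ik} = \delta_{ij},
\]
which is the third condition. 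For the second condition, observe that $\frac{\partial}{\partial u_k}\delta_{ij} = 0$; expanding the left side through the defining relation of the dual connection $\nabla^*$ and using $\nabla_{\partial/\partial u_k}\frac{\partial}{\partial u_i} = 0$ gives $h\!\left(\frac{\partial}{\partial u_i}, \nabla^*_{\partial/\partial u_k}\frac{\partial}{\partial v_j}\right) = 0$ for every $i$. Since $\left\{\frac{\partial}{\partial u_i}\right\}$ is a frame and $h$ is nondegenerate, $\nabla^*_{\partial/\partial u_k}\frac{\partial}{\partial v_j} = 0$ for all $k$, so each $\frac{\partial}{\partial v_j}$ is $\nabla^*$-parallel; in particular $\nabla^*_{\partial/\partial v_i}\frac{\partial}{\partial v_j} = 0$, establishing that $v$ is $\nabla^*$-affine.

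The only genuinely delicate point is the existence step: one must recognize that total symmetry of $\nabla h$ is exactly the integrability condition permitting $v$, and then $\varphi$, to be recovered by successive integration, and this is where simple connectedness of $U$ (equivalently, convexity of $u(U)$ for the explicit line-integral construction) is used. Once $\varphi$ is in hand the remaining verification is formal: the pairing $h\!\left(\frac{\partial}{\partial u_i}, \frac{\partial}{\partial v_j}\right) = \delta_{ij}$ follows from inverting the Hessian, and the $\nabla^*$-flatness of $v$ drops out of the duality relation together with the $\nabla$-flatness of $u$, with no further computation required.
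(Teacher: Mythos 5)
Your proof is correct, and it is essentially the standard argument that the paper itself omits (the paper defers this proposition to the cited reference \cite{Shima}): read total symmetry of $\nabla h$ in the $\nabla$-affine chart as the integrability condition, integrate the closed forms $\omega_i=\sum_j g_{ij}\,du_j$ and $\eta=\sum_i v_i\,du_i$ to obtain $v$ and then $\varphi$, and verify the duality conditions by inverting the Hessian and using the defining relation of $\nabla^*$ together with $\nabla$-flatness of $u$. All the needed justifications are present, including the use of convexity of $u(U)$ to get global injectivity of $\mathrm{grad}(\varphi)$ and the nondegeneracy argument showing $\nabla^*_{\partial/\partial u_k}\frac{\partial}{\partial v_j}=0$.
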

 
In the set-up in Proposition~\ref{prop : potential}, the smooth function $\vphi$ is referred to as a {\it potential} of the metric $h$, and $(h={\rm Hesse}(\vphi), \nabla)$ is refereed to as a {\it Hesse structure}.   
The following lemma serves as a fundamental tool in this paper and can be verified through the direct computation. 

\begin{lem}\label{lem : affine dually flat}
Let $U$ be a convex open subset of $\R^n$. 
Suppose that a Hesse metric $h$ on $U$ is given. Namely $h$ can be written as  $h=\nabla^{\rm flat}(d\vphi)={\rm Hess}(\vphi)$ for some smooth function $\vphi:U\to \bbR$, where $\nabla^{\rm flat}$ is the canonical flat connection on $U$ with respect to the affine structure. 
Then $(U,h, \nabla^{\rm flat})$ is a dually flat space with an affine coordinate $u={\rm id}:U\to U\subset \R^n$. \end{lem}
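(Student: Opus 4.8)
The plan is to verify directly the three defining properties of a dually flat space from Definition~\ref{defn : dually flat space}. First I would fix a global affine coordinate system $u=(u_1,\ldots,u_n)$ on $U$, which exists precisely because $U$ is an open subset of an affine space; relative to these coordinates the canonical flat connection $\nabla^{\rm flat}$ is characterized by the vanishing of its Christoffel symbols, i.e.\ $\nabla^{\rm flat}_{\partial_i}\partial_j=0$ for all $i,j$, where $\partial_i:=\partial/\partial u_i$. From this the first two requirements are immediate: $U$ is a smooth manifold, and since the coordinate vector fields commute, both the torsion $T(\partial_i,\partial_j)=\nabla^{\rm flat}_{\partial_i}\partial_j-\nabla^{\rm flat}_{\partial_j}\partial_i-[\partial_i,\partial_j]$ and the curvature $R(\partial_i,\partial_j)\partial_k$ vanish, so $\nabla^{\rm flat}$ is a torsion free flat affine connection. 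That $h$ is a Riemannian metric is already built into the hypothesis that $h={\rm Hess}(\vphi)$ is a Hesse metric, in particular positive definite.

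The remaining and essential point is to show that $\nabla^{\rm flat}h$ is totally symmetric, and I would do this by computing its components in the affine coordinates. Writing $h_{ij}=h(\partial_i,\partial_j)=\dfrac{\partial^2\vphi}{\partial u_i\partial u_j}$ and applying the standard formula for the covariant derivative of a $(0,2)$-tensor,
\[
(\nabla^{\rm flat}_{\partial_k}h)(\partial_i,\partial_j)=\partial_k\bigl(h(\partial_i,\partial_j)\bigr)-h(\nabla^{\rm flat}_{\partial_k}\partial_i,\partial_j)-h(\partial_i,\nabla^{\rm flat}_{\partial_k}\partial_j),
\]
the last two terms drop out because the Christoffel symbols vanish, leaving
\[
(\nabla^{\rm flat}h)_{ijk}=\frac{\partial}{\partial u_k}\frac{\partial^2\vphi}{\partial u_i\partial u_j}=\frac{\partial^3\vphi}{\partial u_i\partial u_j\partial u_k}.
\]
Since $\vphi$ is smooth, the equality of mixed partial derivatives (Schwarz's theorem) shows this expression is invariant under every permutation of $(i,j,k)$, whence $\nabla^{\rm flat}h$ is totally symmetric. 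This completes the verification of the three conditions.

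I do not expect a genuine obstacle here: the conceptual content is simply that in flat affine coordinates the tensor $\nabla^{\rm flat}h$ coincides with the third-derivative tensor of the potential $\vphi$, whose total symmetry is elementary. The only points requiring mild care are the existence of global affine coordinates (guaranteed by the affine structure on $U$, so no local-to-global issue arises) and the correct bookkeeping in the tensorial covariant-derivative formula; once the Christoffel symbols are set to zero, everything reduces to the commutativity of partial differentiation.
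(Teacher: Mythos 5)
Your proof is correct and is precisely the ``direct computation'' the paper alludes to: the paper states this lemma without proof, remarking only that it can be verified directly, and your verification (vanishing Christoffel symbols in global affine coordinates, plus total symmetry of $\partial^3\vphi/\partial u_i\partial u_j\partial u_k$ via Schwarz's theorem) is the intended argument. No gaps to report.
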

\subsection{Bregman divergence of dually flat spaces}
Let $(U,h,\nabla)$ be a dually flat space. 
Suppose that there exists a pair of dual affine coordinate systems $(u,v)$ on $U$ and a potential $\vphi$ for $(u,v)$.  
Then we have the {\it Legendre dual {\rm or the} dual potential $\psi : v(U)\to \bbR$} of $\vphi$, which is characterized as 
\[
\vphi(u)+\psi(v)-u\cdot v=0. 
\]
Namely $\psi:v(U)\to \bbR$ is defined as 
\[
\psi(v(\xi))=-\vphi(u(\xi))+u(\xi)\cdot v(\xi)
\]for $\xi\in U$ under the identification $u:U\cong u(U)$ and $v:U\cong v(U)$. 
Using the Legendre dual one can define the Bregman divergence of the Hesse structure. 
\begin{defn}
For a dually flat space $(U,h,\nabla)$ with a pair of dual affine coordinate systems $(u,v)$ and a potential $\vphi:u(U)\to \bbR$, 
the {\it Bregman divergence} $D(\cdot \| \cdot):U\times U\to\bbR$ is defined by 
\[
D(\xi \| \xi')=\vphi(u(\xi))+\psi(v(\xi'))-u(\xi)\cdot v(\xi'). 
\]
\end{defn}
Note that the divergence is independent of the choice of the pair of dual affine coordinate systems and potentials. 

\subsection{Torification of dually flat spaces}
Following \cite{Molitor} we introduce the notion of torification of dually flat spaces. 
Let $(U,h,\nabla)$ be an $n$-dimensional dually flat space. 
Recall that by Dombrowski's construction (\cite{Dombrowski}) the tangent bundle $TU$ has a natural K\"ahler structure. 
Suppose that there exists a {\it parallel lattice} $\Gamma$ with respect to $\nabla$ generated by $n$-tuple of parallel vector fields $(X_1, \ldots, X_n)$ on $U$. Namely :  
\begin{itemize}
\item $\{X_1(\xi),\ldots, X_n(\xi)\}$ is a basis of $T_\xi U$ for each $\xi\in U$. 
\item $\Gamma=\{k_1X_1(\xi)+\cdots+ k_nX_n(\xi) \ | \ \xi\in U, k_1,\ldots, k_n\in \bbZ\}$. 
\end{itemize}
Note that $\Gamma$ is isomorphic to $\bbZ^n$ as an abelian group. 
The lattice $\Gamma$ acts on $TU$ in an effective and holomorphic isometric way by the translation: 
\[
T_\xi U\ni v \mapsto v+k_1X_1(\xi)+\cdots+ k_nX_n(\xi)\in T_\xi U
\]  for $\xi\in U$ and $k_1,\ldots, k_n\in \bbZ$. 
Using this group action we have a K\"ahler manifold as the quotient $U_\Gamma:=TU/\Gamma$. 
Furthermore, the natural projection $\pi_\Gamma:U_\Gamma\to U$ has a structure of a {\it Lagrangian torus fibration} and is endowed with a natural $T=(S^1)^n$-action, which induces a structure of a (possibly non-compact) toric K\"ahler manifold on $U_\Gamma$.   

\begin{defn}
Let $M$ be a K\"ahler manifold equipped with a holomorphic isometric action of $T$.  
$M$ is said to be a {\it torification} of a dually flat space $(U,h,\nabla)$ if there exists a parallel lattice $\Gamma$ of $U$ and a $T$-equivariant isomorphism between the K\"ahler manifolds $U_\Gamma$ and $M^\circ$, where $M^\circ$ is the set of points in $M$ on which $T$ acts freely. 
\end{defn}

\begin{Rem}We give small remarks on torifications. 
\begin{enumerate}
\item If a dually flat space $(U,h,\nabla)$ has a parallel lattice $\Gamma$, then $U_\Gamma$ itself is obviously a torification of it. For instance, the dually flat space described in Lemma~\ref{lem : affine dually flat} has a natural torification. 
\item A torification is not necessarily compact. In fact, the torification $U_\Gamma$ mentioned above is not compact in general. 
\item In \cite{Molitor} the notion of {\it regular torification} is defined as a special class of torifications and studied in detail. In particular the uniqueness of the regular torification is established. 
\end{enumerate}
\end{Rem}

\begin{Rem}
In contrast to the compact case, it is known in \cite{KarshonLerman} that non-compact toric manifolds cannot be classified by their moment map image, and additional data called the characteristic classes are needed in general.  
As it was shown in \cite[Proposition~6.5.]{KarshonLerman}, if the moment map is a proper map into a convex open subset in the dual of the Lie algebra, then the toric manifold is determined by the moment map image up to isomorphism even if it is non-compact. 
All non-compact toric manifolds treated in the present paper as torifications satisfy this condition.  
\end{Rem}



 \section{Delzant polytopes as dually flat spaces}
 Let $P$ be an $n$-dimensional Delzant polytope, $\vphi:P^\circ\to \R$ a symplectic potential and $T$ an  $n$-dimensional torus with ${\rm Lie}(T)=\frakt\cong \bbR^n$. 
 Since its Hessian $G:={\rm Hess}(\vphi)$ is positive definite, $G$ determines a structure of a dually flat space on $P^\circ$ by Lemma~\ref{lem : affine dually flat}. 
 We explain its torification following \cite{Molitor}.

  \subsection{Torification of Delzant polytopes}
  \label{subsec:Torification of Delzant polytope}
 We use {\it complex coordinate} on toric K\"ahler manifold to describe the torification of a Delzant polytope. 
 
 Let $M_P$ be the compact toric K\"ahler manifold associated with $P$ and $\vphi$.  
 As it is explained in \cite{Abreu}, it is known that there is an isomorphism of K\"ahler manifolds 
 \[
 P^\circ\times T\cong M_P^\circ\cong\bbR^n\times T
 \]given by $(\xi, t)\mapsto (y(\xi), t)=(\grad(\vphi)(\xi), t)$, 
 where the K\"ahler structure on the right hand side is given by 
 \[
 {\rm complex \ structure}=\begin{pmatrix}
 0 & -{\rm id} \\ 
 {\rm id} & 0
 \end{pmatrix},
 \]
 \[
 {\rm symplectic \ structure}=\begin{pmatrix}
 0 & {\rm Hess}(\psi) \\ 
 -{\rm Hess}(\psi) & 0
 \end{pmatrix}, 
 \] and
\[ 
 {\rm  Riemannian \ metric}=
 \begin{pmatrix}
 {\rm Hess}(\psi) & 0 \\ 
 0 & {\rm Hess}(\psi)
 \end{pmatrix}
 \]for the Legendre dual $\psi$ of $\vphi$. 
 Let $x^*$ be the standard coordinate on $\bbR^n$.
 The coordinate $(x^*,t)$ on $M_P^\circ\cong \bbR^n\times T$ is often called the {\it complex coordinate} in the context of toric K\"ahler geometry. 
 In fact $\bbR^n\times T$ is isomorphic to the $n$-dimensional complex torus $(\bbC^\times)^n$ by $(x^*, t)\mapsto e^{x^*+it}$, and it is isomorphic to the open dense $(\bbC^\times)^n$-orbit in $M_P$.  

We have a dually flat space $(\bbR^n, {\rm Hess}(\psi), \nabla^{\rm flat})$ with the dual pair of affine coordinate systems $(x^*, y^*=\grad(\psi))$ and a parallel frame $\Gamma$ of $T\bbR^n$ generated by $\left(\frac{\partial}{\partial x^*_i}\right)_{i=1,\ldots,n}$. 
Then the above K\"ahler structure on $\bbR^n\times T$ is nothing other than that on $(\bbR^n)_\Gamma$ using Dombrowski's construction. 
This implies that $M_P$ is a torification of $(\bbR^n, {\rm Hess}(\psi), \nabla^{\rm flat})$.  
In fact \cite[Theorem~7.1]{Molitor} shows that $y^* : \bbR^n\to P^\circ$ gives an isomorphism between dually flat spaces $(\bbR^n, {\rm Hess}(\psi), \nabla^{\rm flat})$ and $(P^\circ, G, \nabla=(\nabla^{\rm flat})^*)$, which is a Legendre transform.
In particular $M_P$ is a torification of $(P^\circ, G, \nabla)$. 

The affine coordinate $x^*$ on $(\R^n, {\rm Hess}(\psi), \nabla^{\rm flat})$ is called the {\it canonical parameter} of exponential family in information geometry. 
Let $x$ be the standard coordinate on $P^\circ \subset \frakt^*\cong \bbR^n$, which gives an affine coordinate 
on $(P^\circ, G,  \nabla^{\rm flat}=\nabla^*)$. 
The affine coordinate $x$ is called the {\it mixture parameter} or {\it expectation parameter}. 
In \cite{Molitor} the dually flat space $(\bbR^n, {\rm Hess}(\psi), \nabla^{\rm flat})$ is studied in detail together with its dual space $(P^\circ, G,\nabla)$. 
Hereafter we mainly focus on the dually flat space $(P^\circ, G,  \nabla^{\rm flat})$ with the dual pair of affine coordinate systems $(x,y=\grad(\vphi))$ to capture the boundary behavior.

\begin{Rem}\label{rem:toric to dually flat}
We also have the correspondence of the converse direction. 
Namely  for any compact toric K\"ahler manifold $M$ with the moment map $\mu:M\to \bbR^n$ we have a symplectic potential $\vphi:\mu(M)^\circ\to \bbR$ by the latter part of Theorem~\ref{thm : toric Kahler metric}. 
It determines a dually flat space $(\mu(M^\circ), {\rm Hess}(\vphi), \nabla^{\rm flat})$ by Lemma~\ref{lem : affine dually flat}. 
\end{Rem}

 \subsection{Bregman divergence of Delzant polytopes}
We derive a formula of the Bregman divergence of a Delzant polytope with a symplectic potential, which is a fundamental tool for the subsequent sections in this paper.  
 
 \begin{prop}\label{prop : Divergence of P}
 We have the following formula for the Bregman divergence of $(P^\circ, G={\rm Hess}(\vphi),\nabla^{\rm flat})$. 
 \[
 D(\xi \| \xi')=\frac{1}{2}\sum_{r=1}^{N}\left(l^{(r)}(\xi)\log\frac{l^{(r)}(\xi)}{l^{(r)}(\xi')}-(\xi-\xi')\cdot\nu^{(r)}\right)
 +(\xi'-\xi)\cdot (\grad(f)(\xi'))+f(\xi)-f(\xi'), 
 \]where $f:=\vphi-\vphi_P$. 
 \end{prop}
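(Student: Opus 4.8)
The plan is to avoid computing the Legendre dual $\psi$ explicitly and instead reduce the defining formula of the divergence to a purely convex-analytic expression in $\vphi$ alone. By the discussion of Subsection~\ref{subsec:Torification of Delzant polytope}, the relevant dual pair of affine coordinates is $(x,y)$ with $x$ the standard coordinate on $P^\circ$ (so that $x(\xi)=\xi$) and $y=\grad(\vphi)$, and the potential is the symplectic potential $\vphi$. Substituting the defining relation $\psi(y(\xi'))=-\vphi(\xi')+\xi'\cdot y(\xi')$ of the Legendre dual into
\[
D(\xi\|\xi')=\vphi(\xi)+\psi(y(\xi'))-\xi\cdot y(\xi')
\]
immediately eliminates $\psi$ and yields the standard form
\[
D(\xi\|\xi')=\vphi(\xi)-\vphi(\xi')-(\xi-\xi')\cdot\grad(\vphi)(\xi').
\]
This is the expression I would actually evaluate.

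Next I would split $\vphi=\vphi_P+f$. Since the operation $\vphi\mapsto \vphi(\xi)-\vphi(\xi')-(\xi-\xi')\cdot\grad(\vphi)(\xi')$ is linear in $\vphi$, the divergence splits as $D=D_{\vphi_P}+D_f$, where $D_f=f(\xi)-f(\xi')+(\xi'-\xi)\cdot\grad(f)(\xi')$ reproduces verbatim the last three terms of the claimed formula. It then remains to show that $D_{\vphi_P}$ equals the first sum. For this I compute $\grad(\vphi_P)$: each $l^{(r)}$ is affine with $\grad(l^{(r)})=\nu^{(r)}$, so differentiating $\tfrac12\sum_r l^{(r)}\log l^{(r)}$ gives $\grad(\vphi_P)(\xi)=\tfrac12\sum_r\nu^{(r)}\bigl(\log l^{(r)}(\xi)+1\bigr)$.

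The final step is an algebraic simplification using the affine identity $l^{(r)}(\xi)-l^{(r)}(\xi')=(\xi-\xi')\cdot\nu^{(r)}$. Abbreviating $a_r=l^{(r)}(\xi)$ and $b_r=l^{(r)}(\xi')$, the $r$-th summand of $D_{\vphi_P}$ is
\[
a_r\log a_r-b_r\log b_r-(a_r-b_r)(\log b_r+1),
\]
which collapses to $a_r\log(a_r/b_r)-(a_r-b_r)$; rewriting $a_r-b_r$ back as $(\xi-\xi')\cdot\nu^{(r)}$ reproduces exactly the claimed first sum.

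There is no serious obstacle here: the whole argument is a direct computation valid on $P^\circ$, where every $l^{(r)}$ is strictly positive so that the logarithms are defined. The only point demanding care is tracking the constant \emph{$+1$} terms produced by differentiating $l^{(r)}\log l^{(r)}$ and checking that they combine with the remaining pieces to produce the $-(\xi-\xi')\cdot\nu^{(r)}$ correction rather than a bare relative-entropy sum; this bookkeeping is precisely what distinguishes the formula from a naive Kullback--Leibler expression.
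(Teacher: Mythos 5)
Your proof is correct and follows essentially the same route as the paper's: both reduce the divergence to the standard form $\vphi(\xi)-\vphi(\xi')-(\xi-\xi')\cdot\grad(\vphi)(\xi')$ via the Legendre relation, compute $\grad(\vphi_P)=\tfrac12\sum_r\nu^{(r)}\bigl(\log l^{(r)}+1\bigr)$, and simplify using the affine identity $(\xi-\xi')\cdot\nu^{(r)}=l^{(r)}(\xi)-l^{(r)}(\xi')$. Your explicit appeal to linearity of the Bregman form in the potential, giving the split $D=D_{\vphi_P}+D_f$, is only a slightly more modular organization of the single expansion the paper carries out.
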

 \begin{proof}
 We start from the canonical affine coordinate $x={\rm id} : P^\circ\to \R^n$. 
 Its dual affine coordinate and dual potential are given by 
 \[
 y={\rm grad}(\vphi)=\left(\frac{\partial \vphi}{\partial x_1},\cdots, \frac{\partial \vphi}{\partial x_n}\right): P^\circ\to \R^n
 \]
 and 
 \[
 \psi(y(\xi))=-\vphi(x(\xi))+x(\xi)\cdot y(\xi)=-\vphi(\xi)+\xi\cdot (\grad(\vphi)(\xi)). 
 \]
 By using the formula 
 \[
 \grad(\vphi)=\grad(\vphi_P)+\grad(f)=\frac{1}{2}\sum_r\left(\nu^{(r)}\log l^{(r)}+\nu^{(r)}\right)+\grad(f), 
 \]
 the Bregman divergence can be computed as  
 \begin{eqnarray*}
 D(\xi \| \xi')&=& \vphi(x(\xi))+\psi(y(\xi'))-x(\xi)\cdot y(\xi') \\ 
 &=& \vphi(\xi)-\vphi(\xi')+\xi'\cdot (\grad(\vphi)(\xi')) -\xi\cdot(\grad(\vphi)(\xi')) \\ 
 &=& \frac{1}{2}\sum_r\left(l^{(r)}(\xi)\log l^{(r)}(\xi)-l^{(r)}(\xi')\log l^{(r)}(\xi')+(\xi'-\xi)\cdot (\nu^{(r)}\log l^{(r)}(\xi')+\nu^{(r)})\right) \\ 
 && \hspace{6cm}+(\xi'-\xi)\cdot (\grad(f)(\xi'))+f(\xi)-f(\xi') \\
 &=& \frac{1}{2}\sum_{r}\left(l^{(r)}(\xi)\log\frac{l^{(r)}(\xi)}{l^{(r)}(\xi')}-(\xi-\xi')\cdot\nu^{(r)}\right)+(\xi'-\xi)\cdot (\grad(f)(\xi'))+f(\xi)-f(\xi'). 
 \end{eqnarray*}
 \end{proof}
 
 \subsection{Torification of mixture families on a finite set}\label{subsec : prob densi func}
 Let $P$ be a Delzant polytope with $N$ defining inequalities $l^{(r)}(\xi)=\xi\cdot\nu^{(r)}+\lambda^{(r)}\geq 0$ ($r=1,\ldots, N$). 
 If normal vectors $\nu^{(1)},\cdots,  \nu^{(N)}$ satisfy the condition 
 \begin{equation}\label{eq:zerosum}
 \sum_{r}\nu^{(r)}={\bf 0}, 
 \end{equation}then we have 
 \[
 \sum_rl^{(r)}(\xi)=\sum_r\lambda^{(r)}
\]is constant. This implies that the Delzant polytope $P$ associates a family of probability density functions $\Theta_P=\{p(\cdot|\xi)\}_{\xi\in P^\circ}$ on a finite set $[N]:=\{1, \cdots, N\}$ defined by 
\[
p(r|\xi):=\frac{l^{(r)}(\xi)}{\sum_{r'}\lambda^{(r')}} \quad (r\in \{1,\cdots, N\}). 
\]
For the Guillemin potential $\varphi_P=\frac{1}{2}\sum_r l^{(r)}\log l^{(r)}$ of $P$, by Proposition~\ref{prop : Divergence of P} one has the Bregman divergence 
\[
D_P(\xi \| \xi')=\frac{1}{2}\sum_r\left(l^{(r)}(\xi)\log\frac{l^{(r)}(\xi)}{l^{(r)}(\xi')}\right). 
\]This is nothing other than the constant multiple of the {\it Kullback-Leibler divergence} $D_{\Theta_P}^{\rm KL}$ of $\Theta_P$, which is defined by 
\[
D_{\Theta_P}^{\rm KL}(\xi\|\xi')=\sum_{r}p(r|\xi)\log\frac{p(r|\xi)}{p(r|\xi')}=\frac{1}{\sum_{r'}\lambda^{(r')}}\sum_r\left(l^{(r)}(\xi)\log\frac{l^{(r)}(\xi)}{l^{(r)}(\xi')}\right). 
\]
This observation implies that the toric K\"ahler manifold $M_P$ equipped with the Guillemin metric is a  torification of $(P^\circ, G_P={\rm Hess}(\varphi_P),(\nabla^{\rm flat })^*)$ and $G_P$ is the constant multiple of the Fisher metric of the family of probability density functions $\Theta_P$. 

Now we investigate the converse. 
Consider a family of probability density functions $\Theta$ on the finite set $[N]=\{1, \cdots, N\}$ parametrized by an open subset $U$ of $\bbR^n$.  
Suppose that each element in $\Theta$ has a form 
\[
p(r|\xi)=\xi\cdot\alpha^{(r)} +\beta^{(r)} \quad  (\xi\in U, r\in[N])
\]for some $\alpha^{(r)}\in\bbR^n$, $\beta^{(r)}\in\bbR$. 
In other words, $\Theta$ is a {\it mixture family on the finite set $[N]$}. 
We assume that $U\ni \xi \mapsto p(\cdot|\xi)\in\Theta$ is bijective. 
Since we have the equalities 
\[
1=\sum_rp(r|\xi)=\sum_r\xi\cdot\alpha^{(r)}+\sum_r\beta^{(r)}
\]for all $\xi\in U$, we have the condition 
\begin{equation}\label{eq:zerosum2}
\sum_r\alpha^{(r)}={\bf 0}. 
\end{equation}
Let $\vphi:U\to \bbR$ be the function defined by 
\[
\vphi(\xi)=\frac{1}{2}\sum_rp(r|\xi)\log(p(r|\xi)), 
\]which is a (half of a) potential function of the Fisher metric on $\Theta$. 
For simplicity we may regard $U$ as the maximal open set on which all $p(r|\xi)$'s are defined. 
Namely we assume that 
\[
U=\{\xi\in\bbR^n \ | \ p(r|\xi)>0 \  (\forall r\in[N])\},  
\]then the closure $\overline{U}$ is a convex polytope in $\bbR^n$. 
If $\overline U$ is a Delzant polytope, then as we discussed in Section~\ref{subsec:Torification of Delzant polytope}, 
the toric K\"ahler manifold $M_{\overline U}$ equipped with the Guillemin metric is a torification of $(U,  {\rm Hess}(\vphi), \nabla=(\nabla^{\rm flat})^*)$.

\begin{lem}\label{lem:proper=compact}
If $M$ is a torification of $(U,{\rm Hess}(\vphi), \nabla=(\nabla^{\rm flat})^*)$, then  
the associated moment map $\mu:M\to \bbR^n$ is proper if and only if $M$ is compact. 
\end{lem}
\begin{proof}
It suffices to show that if $\mu$ is proper, then $M$ is compact. 
$\overline U$ can be embedded into $\Delta^N$ by 
\[
\overline U\ni \xi\mapsto (p(1|\xi), \ldots, p(N|\xi))\in \Delta^N, 
\]where $\Delta^N$ is the probability $N$-simplex in $\bbR^{N}$, 
\[
\Delta^N=\left\{(\eta_1, \ldots, \eta_{N})\in\bbR^{N} \ \middle| \ \sum_{j=1}^{N}\eta_j=1\right\}. 
\]
In particular $\overline U$ is compact. 

We may take a parallel lattice $\Gamma$ in $TU$ as the lattice generated by $\nabla$-parallel frame $\left(\frac{\partial}{\partial y_i}\right)_i$ for $y=\grad(\vphi)$. 
By definition there are an isomorphism 
\[
F:M^\circ \cong U_\Gamma = U\times T
\]between toric K\"ahler manifolds for $\Gamma$ of $(U,{\rm Hess}(\vphi), \nabla)$ and a compatible projection (\cite[Definition~6.6(3)]{Molitor}) $\kappa:M^\circ\to U$. 
By \cite[Proposition~6.8~(1)]{Molitor} we have 
\[
\mu|_{M^\circ}=x\circ\kappa={\rm pr}_1\circ F:M^\circ\to\bbR^n
\]for the restriction of the moment map $\mu$.  
Here the matrix of change of basis of $\Gamma$ in \cite[Proposition~6.8~(1)]{Molitor} is irrelevant. If necessary, we compose a translation to $\mu$ so that the constant vector $C$ in \cite[Proposition~6.8~(1)]{Molitor} does not appear.  
We have the equalities 
  \[
 \mu^{-1}(\overline U)=\overline{\mu^{-1}( U)}=\overline{M^\circ}=M,  
 \]and hence, since $\overline U$ is compact and $\mu$ is proper, $M$ is compact. 
\end{proof}

The following gives a characterization for a torification of our mixture family. 

\begin{thm}\label{prop:M_P to P}
Let $U$ be the above parameter space of the mixture family $\Theta$. 
If $(U,{\rm Hess}(\vphi), \nabla=(\nabla^{\rm flat})^*)$ has a compact torification $M$, then the closure $\overline{U}$ is the Delzant polytope associated with $M$ whose normal vectors of facets  satisfy the zero-sum condition (\ref{eq:zerosum}). 
\end{thm}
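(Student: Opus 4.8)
The plan is to first identify $\overline{U}$ with the moment polytope of $M$, and then to read off the zero sum condition by comparing the mixture potential $\vphi=\tfrac12\sum_r p(r|\cdot)\log p(r|\cdot)$ with the Guillemin potential of that polytope along the boundary $\partial\overline U$.

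First I would pin down the polytope. As $M$ is a compact toric K\"ahler manifold, the Delzant correspondence makes $P_M:=\mu(M)$ a Delzant polytope. Arguing exactly as in the proof of Proposition~\ref{lem:proper=compact}, the parallel lattice $\Gamma=\langle\partial/\partial y_i\rangle$ (with $y=\grad(\vphi)$) and the torus equivariant isomorphism $F\colon M^\circ\cong U_\Gamma=U\times T$ give $\mu|_{M^\circ}=\mathrm{pr}_1\circ F=x$, the standard coordinate on $U$; hence $\mu(M^\circ)=U$. Since $M$ is compact, $\mu(M)=\overline{\mu(M^\circ)}=\overline U$, so $\overline U=P_M$ is the Delzant polytope associated with $M$. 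This is the first assertion.

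Next I would exploit that $\vphi$ is a symplectic potential on $\overline U$: the torification identifies $\mathrm{Hess}(\vphi)$ with the metric of $M$ in symplectic coordinates, so by Theorem~\ref{thm : toric Kahler metric} the function $\vphi$ coincides up to an affine function with a symplectic potential of $M$, and the three defining conditions are unaffected by adding an affine function. In particular $\vphi-\vphi_{P_M}$ extends smoothly to a neighborhood of $\overline U$, where $\vphi_{P_M}=\tfrac12\sum_s l^{(s)}\log l^{(s)}$ and $l^{(s)}(\xi)=\xi\cdot\nu^{(s)}+\lambda^{(s)}$ runs over the facets. Since $t\mapsto t\log t$ is non-smooth only at $t=0$, the non-smooth locus of $\vphi$ inside $\overline U$ is $\bigcup_r\{p(r|\cdot)=0\}\cap\overline U$, whereas that of $\vphi_{P_M}$ is $\partial\overline U$; matching them forces each hyperplane $\{p(r|\cdot)=0\}$ that meets $\overline U$ to support a facet, i.e.\ $p(r|\cdot)=c_r\,l^{(s)}$ with $c_r>0$. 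Comparing the coefficients of the leading term $l^{(s)}\log l^{(s)}$ on each facet $s$ then yields $\sum_{r:\,p(r|\cdot)=c_rl^{(s)}}c_r=1$, equivalently $\nu^{(s)}=\sum_{r:\,\alpha^{(r)}\parallel\nu^{(s)}}\alpha^{(r)}$.

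Summing this over the facets and using that each component proportional to some $l^{(s)}$ belongs to exactly one facet, together with the identity (\ref{eq:zerosum2}) $\sum_r\alpha^{(r)}=\mathbf 0$ forced by $\sum_r p(r|\cdot)\equiv1$, gives
\[
\sum_s\nu^{(s)}=\sum_r\alpha^{(r)}=\mathbf 0,
\]
which is the zero sum condition (\ref{eq:zerosum}). I expect the main obstacle to lie in the boundary singularity bookkeeping behind the displayed facet identity: one must show that \emph{every} component function is accounted for by a facet, ruling out components whose zero set misses $\overline U$ (or meets it only in a face of codimension $\ge2$, which would introduce a singularity absent from $\vphi_{P_M}$), and one must check the matching of coefficients remains valid up to the vertices and not merely on facet interiors. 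This is exactly the point where the maximality of $U$ and all three conditions in the definition of a symplectic potential have to be used.
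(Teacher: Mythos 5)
Your first half coincides with the paper's own proof: compactness of $M$ gives properness of $\mu$, and the argument of Proposition~\ref{lem:proper=compact} yields $\mu(M)=\overline{\mu(M^\circ)}=\overline U$, so $\overline U$ is the Delzant polytope of $M$. Your second half genuinely departs from the paper, which at this point only asserts that the facets of $\overline U$ lie on the hyperplanes $\{\xi\cdot\alpha^{(r)}+\beta^{(r)}=0\}$ and concludes from (\ref{eq:zerosum2}) ``by multiplying denominators''; you instead match the singular parts of $\vphi$ and of the Guillemin potential of $\overline U$. Your per-facet identity $\sum_r c_r=1$ is correct and sharper than anything recorded in the paper, provided the groups are indexed by the hyperplane $\{p(r|\cdot)=0\}$ rather than by the condition $\alpha^{(r)}\parallel\nu^{(s)}$ (a component can have gradient parallel to $\nu^{(s)}$ without vanishing on that facet). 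The genuine gap is exactly the step you flag and postpone: nothing forces every component with $\alpha^{(r)}\neq\mathbf{0}$ to vanish somewhere on $\overline U$. Maximality of $U$ and all three symplectic-potential conditions are compatible with ``redundant'' components that are strictly positive on a neighborhood of $\overline U$; such a component contributes to $\sum_r\alpha^{(r)}$ but to no facet group, and then your concluding equality $\sum_s\nu^{(s)}=\sum_r\alpha^{(r)}$ breaks down.

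This is not a repairable technicality but an actual obstruction. Fix $t\in(0,1/3)$ and let $\overline U$ be the scaled Hirzebruch trapezoid (Figure~\ref{fig:trapezoid}) with facet functions $l^{(1)}=\xi_1$, $l^{(2)}=\xi_2$, $l^{(3)}=t-\xi_2$, $l^{(4)}=2t-\xi_1-\xi_2$, whose primitive inward normals sum to $(0,-1)\neq\mathbf{0}$. Consider the mixture family on $[5]$ given by $p_r=l^{(r)}$ for $r\le4$ and $p_5=(1-3t)+\xi_2$. Then $\sum_rp_r\equiv1$, the parametrization is bijective, and $U=\{p_r>0\ \forall r\}$ is exactly the interior of the trapezoid since the constraint $p_5>0$ is inactive ($1-3t>0$); so the setup of Section~4.3 is satisfied. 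Moreover $\vphi=\vphi_{\overline U}+\tfrac12 p_5\log p_5$ is a symplectic potential: conditions (1) and (2) are immediate because $p_5>0$ near $\overline U$, and (3) follows from $\det({\rm Hess}\,\vphi)=\det(G_{\overline U})\bigl(1+\tfrac{1}{2p_5}\,\alpha^{(5)}\cdot G_{\overline U}^{-1}\alpha^{(5)}\bigr)$ together with the smoothness of $G_{\overline U}^{-1}$ up to the boundary (a standard property of the Guillemin metric, used also in the proof of Lemma~\ref{lem:conti of potential}). Hence, by Theorem~\ref{thm : toric Kahler metric} and the discussion in Section~\ref{subsec:Torification of Delzant polytope}, the Hirzebruch surface with this (non-Guillemin) toric K\"ahler metric is a compact torification of $(U,{\rm Hess}\,\vphi,\nabla)$, yet the facet normals of $\overline U$ do not sum to zero. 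So the statement requires an additional irredundancy hypothesis --- every component with $\alpha^{(r)}\neq\mathbf{0}$ vanishes somewhere on $\overline U$ --- and under that hypothesis your singular-locus bookkeeping does close the proof (your parenthetical argument correctly excludes components vanishing only on faces of codimension $\ge2$). Note that the paper's published proof steps over the very same point: it tacitly treats $r\mapsto\text{facet}$ as a bijection. Your proposal, while incomplete as written, makes explicit a gap that the paper's argument shares.
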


\begin{proof}
Since $M$ is compact the moment map $\mu$ is proper, and hence, together with the argument in the proof of Lemma~\ref{lem:proper=compact} we have the equalities
 \[
 \mu(M)=\mu(\overline{M^\circ})=\overline{\mu(M^\circ)}=\overline U.  
 \]
 The facets of the closure $\overline U$ consist of points in $\bbR^n$ satisfying 
 \[
\xi\cdot\alpha^{(r)} +\beta^{(r)} = 0. 
 \] 
  This condition and the relation~(\ref{eq:zerosum2}) imply the statement by multiplying the denominators of $\alpha^{(r)}$'s if necessarily.  
\end{proof}

\begin{Rem}
In the above set-up $\vphi$ induces the half of Fisher metric on $\Theta$ and it induces the Guillemin metric on $M_{\overline U}$. 
\end{Rem}

Theorem~\ref{prop:M_P to P} gives a condition for a toric K\"ahler manifold which can become a torification of a mixture family. 
For example a Hirzebruch surface whose moment map image is the trapezoid as in Figure~\ref{fig:trapezoid} cannot become a torification of any mixture family.

\begin{figure}[h]
\centering
\includegraphics[scale=0.3]{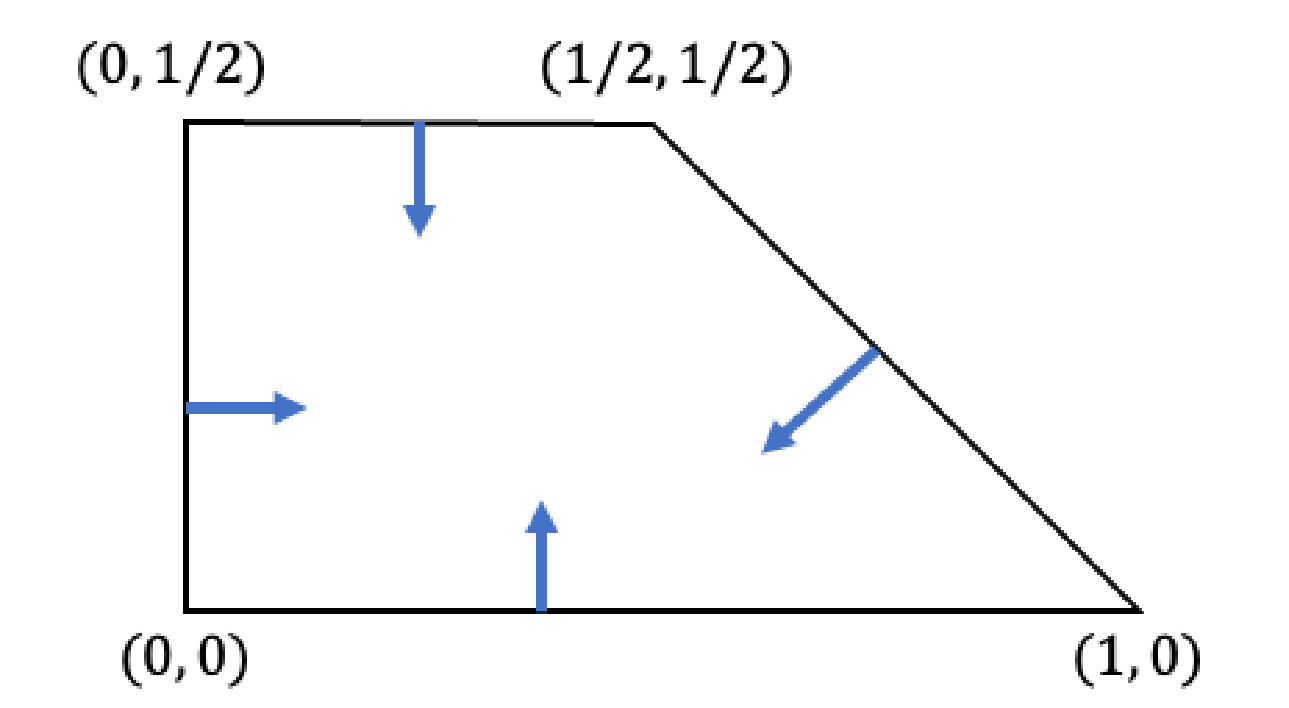}
\caption{A trapezoid of a Hirzebruch surface} \label{fig:trapezoid}
\end{figure}

 \section{Dually flat structure on the boundary of Delzant polytopes}
Let $P$ be an $n$-dimensional Delzant polytope and $T$ an  $n$-dimensional torus with ${\rm Lie}(T)=\frakt\cong \bbR^n$. 
We take and fix a $k$-dimensional face $F$ of $P$. 
Without loss of generality we may assume that $F$ is the set of solutions of $n-k$ linear equalities 
\[
l^{(1)}(\xi)=\cdots=l^{(n-k)}(\xi)=0
\]in $P\subset \frakt^*$. The relative interior $F^\circ$ of $F$ in $P$ is given by equalities and inequalities 
\[
l^{(1)}(\xi)= \cdots =l^{(n-k)}(\xi)=0, \ l^{(n-k+1)}(\xi)>0, \cdots , l^{(N)}(\xi)>0. 
\]
Let $\vphi:P^\circ\to \bbR$ be a symplectic potential, and 
consider the associated toric K\"ahler manifold $M_P$ and its moment map $\mu:M_P\to P$. 
Note that the Guillemin potential $\vphi_P$ can be extended as a continuous function on $P$, 
and its restriction to $F^\circ$ is a smooth function. 
In particular since $\vphi-\vphi_P$ is a smooth function on $P$, $\vphi$ can be extended as a continuous function on $P$ and  its restriction to $F^\circ$ is smooth. 
We put $M_P(F):=\mu^{-1}(F)$. 
It is known that : 
\begin{itemize}
\item $T_F^\perp$ acts trivially on $M_P(F)$, where $T_F^\perp$ is the subtorus generated by normal vectors $\nu^{(1)}, \cdots, \nu^{(n-k)}\in \frakt$ of $F$. Moreover the action of $T_F:=T/T_F^\perp$ on $M_P(F)$ is Hamiltonian. 
\item One can choose a moment map $\mu_F:M_P(F)\to {\rm Lie}(T_F)^*=\frakt_F^*$ so that the Delzant polytope $P_F:=\mu_F(M_P(F))$ is mapped to $F$ bijectively by the composition of the natural map $\pi_F^*:\frakt_F^*\to \frakt^*$ and an appropriate translation. 
\item The above bijection preserves the interior $P_F^{\circ}$ and relative interior $F^\circ$. 
\item $M_P(F)$ is a K\"ahler submanifold of $M_P$ of complex dimension $k$. This follows from the fact that $M_P(F)$ is a connected component of the fixed point set $(M_P)^{T_F^\perp}$ of the holomorphic $T_F^\perp$-action. 
\end{itemize}
From these facts and Remark~\ref{rem:toric to dually flat} the momentum map $\mu_F:M_P(F)\to P_F\subset \frakt_F^*$ on a toric K\"ahler manifold associates a dually flat space $(P_F^\circ, G_F={\rm Hess}(\vphi_F),\nabla^{\rm flat})$ for a symplectic potential $\vphi_F:P_F^{\circ}\to \bbR$. 



\begin{defn}\label{defn:boundary dually flat}
We call $(P_F^{\circ}, G_F, \nabla^{\rm flat})$ the {\it dually flat space of the boundary $F$.} 
The dually flat space of the boundary gives rise to the Bregman divergence 
\[
D_F(\cdot \| \cdot) : P_F^{\circ}\times P_F^{\circ}\to \bbR. 
\]
\end{defn}

\subsection{Continuity of the divergence on the boundary}
We discuss the continuity between $D(\cdot \| \cdot)$ and $D_F(\cdot \| \cdot)$. 
We rely on the continuity of the symplectic potential.
To simplify the explanation we extend normal vectors $\nu^{(1)}, \ldots, \nu^{(n-k)}$ of $F$ to a rational basis $\{e_1, \ldots, e_{k}, e_{k+1}=\nu^{(1)}, \ldots, e_n=\nu^{(n-k)}\}$ of $\frakt$ 
so that $F$ is represented by the equalities 
\[
\xi_{k+1}=\cdots=\xi_{n}=0
\] and $(\xi_1, \ldots, \xi_k)$ is a coordinate of $F$. 
Note that $\{e_{k+1}, \ldots, e_n\}$ is a basis of $\frakt_F^{\perp}$.  
The subspace $\frakt_F'$ generated by $\{e_1, \ldots, e_k\}$ is isomorphic to $\frakt_F={\rm Lie}(T/T_F^\perp)$ under the natural map $\pi_F$, and hence, the corresponding subtorus $T_F'$ has a structure of a covering space $\pi_F|_{T_F'} : T_F'\to T_F$. 
We identify $\frakt_F'$ and $\frakt_F$ via the differential of $\pi_F|_{T_F'}$, which makes no obstacle. 
In addition we may assume that $\pi_F^*(\mu_F(P_F))$ is equal to $F$ without any translation.  

Let $g^\vphi$ be a torus invariant Riemannian metric on the toric K\"ahler manifold $M_P$. 
Let $g^\vphi_F$ be the induced Riemannian metric on the K\"ahler submanifold $M_P(F)$. 
By Theorem~\ref{thm : toric Kahler metric} one has a symplectic potential $\vphi_F:P_F^\circ\times P_F^{\circ}\to\bbR$ of $g_F^\vphi$. 

\begin{lem}\label{lem:conti of potential}
$\vphi_F$ can be taken as $\vphi_F=\vphi\circ \pi_F^*|_{P_F^\circ}$, where $\vphi$ in the right hand side is considered as a smooth function on $F^\circ$. 
\end{lem}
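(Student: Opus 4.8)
The plan is to determine $\vphi_F$ through the Riemannian metric it must produce. By Theorem~\ref{thm : toric Kahler metric}, the torus invariant K\"ahler metric $g_F^\vphi$ on $M_P(F)$ admits a symplectic potential, and any two symplectic potentials for the same metric have the same Hessian, hence differ by an affine function. It therefore suffices to show that, in the symplectic coordinates on $M_P(F)^\circ$, the induced metric $g_F^\vphi$ takes the form $\mathrm{diag}(\mathrm{Hess}(\vphi|_{F^\circ}),\,\mathrm{Hess}(\vphi|_{F^\circ})^{-1})$ of Theorem~\ref{Guillemins metric}; comparing this with $\mathrm{diag}(G_F,G_F^{-1})$ for $G_F=\mathrm{Hess}(\vphi_F)$ then lets us take $\vphi_F=\vphi\circ\pi_F^*$.

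First I would record the local form of $\vphi$ near $F^\circ$. In the chosen coordinates the facets meeting $F$ have defining functions $l^{(r)}(\xi)=\xi_{k+r}$ for $r=1,\ldots,n-k$, so $\vphi=\frac12\sum_{r=1}^{n-k}\xi_{k+r}\log\xi_{k+r}+h$, where $h=\frac12\sum_{r>n-k}l^{(r)}\log l^{(r)}+f$ is smooth on a neighborhood of $F^\circ$. Since $t\log t\to 0$ as $t\to 0$, the singular summands vanish along $F$, giving $\vphi|_{F^\circ}=h|_{F^\circ}$, a smooth function of $(\xi_1,\ldots,\xi_k)$. Splitting the indices into a tangential block $a,b\in\{1,\ldots,k\}$ and a normal block $\alpha,\beta\in\{k+1,\ldots,n\}$, the Hessian $G=\mathrm{Hess}(\vphi)$ decouples: the singular term depends only on $\xi_{k+1},\ldots,\xi_n$, so as $\xi_\alpha\to 0$ the tangential block tends to $\mathrm{Hess}(\vphi|_{F^\circ})$ and the mixed block stays bounded, while the normal block blows up, being $\mathrm{diag}\!\big(1/(2\xi_{k+r})\big)+O(1)$.

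Next I would compute the induced metric. The coordinates $(\xi_1,\ldots,\xi_k,t_1,\ldots,t_k)$ are symplectic coordinates on $M_P(F)^\circ$: the form $\sum_i d\xi_i\wedge dt_i$ restricts to $\sum_{a=1}^k d\xi_a\wedge dt_a$ because $\xi_\alpha\equiv 0$ on $M_P(F)$, and $(\xi_1,\ldots,\xi_k)$ is the $T_F$-moment map onto $F^\circ\cong P_F^\circ$. As $\mathrm{diag}(G,G^{-1})$ is block diagonal, the base part of $g_F^\vphi$ is $\lim_{\xi_\alpha\to0}G_{ab}=\mathrm{Hess}(\vphi|_{F^\circ})$ and the cross terms vanish. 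For the fiber part I would apply the Schur complement to the normal block, $(G^{-1})_{ab}=\big(G_{FF}-G_{F\perp}G_{\perp\perp}^{-1}G_{\perp F}\big)^{-1}_{ab}$; since $G_{\perp\perp}^{-1}\to 0$ while the tangential and mixed blocks stay bounded, the correction term drops out and $(G^{-1})_{ab}\to\big(\mathrm{Hess}(\vphi|_{F^\circ})^{-1}\big)_{ab}$. This produces exactly the desired block form.

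The main obstacle is not the algebra but making the limit legitimate: the symplectic coordinates $(\xi,t)$ degenerate along $F$, where the circles generated by $T_F^\perp$ collapse, so one must argue that the frame $\partial_{\xi_a},\partial_{t_a}$ ($a\le k$) extends to a genuine frame of $TM_P(F)$ transverse to the collapsing directions, and that evaluating the smooth metric $g^\vphi$ on $M_P$ along an approach $\xi_\alpha\to 0$ recovers the induced metric $g_F^\vphi$. This is where one uses the smoothness of $g^\vphi$ across the face together with the description of $M_P(F)$ as a connected component of the $T_F^\perp$-fixed-point set; the Schur complement cancellation, which forces the boundary fiber metric to invert precisely the tangential Hessian, is the computational heart of the argument.
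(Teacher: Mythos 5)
Your overall strategy is the same as the paper's: reduce the lemma to showing that, in symplectic coordinates on $M_P(F)^\circ$, the induced metric $g^\vphi_F$ has the block form $\mathrm{diag}\left({\rm Hess}(\vphi|_{F^\circ}),{\rm Hess}(\vphi|_{F^\circ})^{-1}\right)$, the key mechanism being that the normal Hessian block blows up along $F$ so that its inverse disappears in the limit. Your Schur complement computation is correct, and it is in fact a useful expansion of the step the paper compresses into the single chain $G_F=H_F^{-1}={\rm Hess}(\vphi)\circ\pi_F^*={\rm Hess}(\vphi\circ\pi_F^*)$: inverting the tangential block of $G^{-1}$ produces the Schur complement $G_{FF}-G_{F\perp}G_{\perp\perp}^{-1}G_{\perp F}$, and the correction term dies because $G_{\perp\perp}^{-1}\to 0$.

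However, the step you defer at the end (\lq\lq one must argue that the frame extends\ldots\rq\rq) is a genuine gap in your write-up, and the paper is organized precisely so as to avoid it. The paper first fixes, via the symplectic cutting description of $M_P$, a continuous section $s:P\to M_P$ whose restrictions give Lagrangian sections over $P^\circ$ and over $F^\circ$ simultaneously; this pins down compatible symplectic coordinates upstairs and on the face. It then computes \emph{only} the fiber block: the vector fields $\underline{e_i}=\partial_{t_i}$ ($i\le k$) are induced by the torus action, hence are smooth on all of $M_P$ and tangent to $M_P(F)$, so $g^\vphi(\underline{e_i},\underline{e_j})$ is a globally smooth function that equals $(G^{-1})_{ij}$ on $M_P^\circ$ and restricts on $M_P(F)^\circ$ to the fiber block $H_F$ of $g^\vphi_F$. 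No limit of base components is ever taken, because Theorem~\ref{thm : toric Kahler metric} applied to the toric K\"ahler manifold $M_P(F)$ already guarantees $g^\vphi_F=\mathrm{diag}(G_F,G_F^{-1})$ with $G_F={\rm Hess}(\vphi_F)$, whence $G_F=H_F^{-1}$, and your Schur limit identifies $H_F^{-1}$ with ${\rm Hess}(\vphi\circ\pi_F^*)$. Your version, which computes both blocks, additionally requires extending the base frame $\partial_{\xi_a}$ across the collapsing locus — a statement that depends on the choice of section and is exactly where the symplectic cutting argument would have to enter; if you keep the two-block route, that extension must actually be proved rather than asserted.
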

\begin{proof}
We take a continuous section $s:P \to M_P$ so that the restriction $s|_{P^\circ}$ gives a Lagrangian section of $M_P^\circ$ and $(s|_{F^\circ})\circ \pi_F^*:P_F^\circ\to M_P(F)^{\circ}$ gives a Lagrangian section.  
Such a section can be taken by using a description of $M_P$ in symplectic cutting method. 
In this set-up it suffices to show that the Riemannian metric $g_F^\vphi$ on $M_P(F)^\circ=P_F^\circ\times T_F$ can be represented as  
\[
\begin{pmatrix}
G_F & 0 \\
0 & G_F^{-1}
\end{pmatrix} 
\]for $G_F={\rm Hess}(\vphi\circ \pi_F^*) : (P_F)^\circ\to {\rm sym}(\frakt_F^*)$. 

Recall that the $k\times k$ matrix $G_F$ is obtained as the inverse matrix of a positive definite matrix  $H_F$ whose $(i,j)$-th entry $(H_F)_{ij}$ is given by   
\[
(H_F)_{ij}=g^{\vphi}_F(\underline{e_i},\underline{e_{j}}) 
\]for $i,j=1,\ldots, k$, where $\underline{e_i}$ is the induced vector field of $e_i\in\frakt$ on $M_P(F)$. 
Note that the right hand side is a $T_F$-invariant function on $M_P(F)$, and hence, it defines a function on $M_F(P)^\circ/T_F=P_F^\circ$. 
Since $g_F^\vphi$ is the restriction of $g^\vphi$, we have the equalities 
\[
g^{\vphi}_F(\underline{e_i},\underline{e_{j}})=g^\vphi(\underline{e_i},\underline{e_{j}})=({\rm Hess}(\vphi))^{-1}_{ij}. 
\] It implies that 
\[
G_F=H_F^{-1}={\rm Hess}(\vphi)\circ\pi_F^*={\rm Hess}(\vphi\circ \pi_F^*) : P_F^\circ\to {\rm sym}(\frakt_F^*).
\]
\end{proof}

\begin{thm}\label{thm : conti divergence}
When points $\xi$ and  $\xi'$ in $P^\circ$ converge to $\eta$ and $\eta'$ in $F^\circ$,  
we have the equality 
\[
\lim_{\xi'\to \eta'}\lim_{\xi\to\eta}D(\xi \|\xi')=D_F((\pi_F^*)^{-1}(\eta)\|((\pi^*_F)^{-1})(\eta')). 
\]
\end{thm}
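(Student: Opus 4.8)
The plan is to start from the explicit divergence formula of Proposition~\ref{prop : Divergence of P}, evaluate the iterated limit term by term, and then recognize the resulting expression as the boundary divergence $D_F$ by feeding the symplectic potential produced by Lemma~\ref{lem:conti of potential} into the same formula.

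For the inner limit $\xi\to\eta$ (with $\xi'\in P^\circ$ fixed) I would isolate the only delicate terms, namely $\tfrac12\,l^{(r)}(\xi)\log\frac{l^{(r)}(\xi)}{l^{(r)}(\xi')}$ for the facets $r=1,\ldots,n-k$ that contain $F$, where $l^{(r)}(\eta)=0$. Writing such a term as $\tfrac12 l^{(r)}(\xi)\log l^{(r)}(\xi)-\tfrac12 l^{(r)}(\xi)\log l^{(r)}(\xi')$ and using $\lim_{t\to0^{+}}t\log t=0$ together with the fact that $l^{(r)}(\xi')>0$ is fixed, each tends to $0$. All remaining terms are continuous up to $F^\circ$: the facets $r>n-k$ have $l^{(r)}(\eta)>0$, and $f=\vphi-\vphi_P$ (hence $\grad(f)$) extends smoothly past $P$, so they pass to the limit by substitution. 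In the outer limit $\xi'\to\eta'$ the facets $r\le n-k$ contribute nothing, since their surviving linear terms are $-\tfrac12(\eta-\eta')\cdot\nu^{(r)}=-\tfrac12\bigl(l^{(r)}(\eta)-l^{(r)}(\eta')\bigr)=0$ because $\eta,\eta'\in F$. I would stress that the iterated order is essential: the joint limit of $l^{(r)}(\xi)\log\bigl(l^{(r)}(\xi)/l^{(r)}(\xi')\bigr)$ does not exist, its value depending on the relative rates at which $l^{(r)}(\xi),l^{(r)}(\xi')\to0$. This yields
\[
\lim_{\xi'\to\eta'}\lim_{\xi\to\eta}D(\xi\|\xi')=\frac12\sum_{r=n-k+1}^{N}\Bigl(l^{(r)}(\eta)\log\tfrac{l^{(r)}(\eta)}{l^{(r)}(\eta')}-(\eta-\eta')\cdot\nu^{(r)}\Bigr)+(\eta'-\eta)\cdot\grad(f)(\eta')+f(\eta)-f(\eta').
\]

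The main step is then to match this with $D_F(\zeta\|\zeta')$, where $\zeta:=(\pi_F^*)^{-1}(\eta)$ and $\zeta':=(\pi_F^*)^{-1}(\eta')$. Here I would use that the Bregman divergence depends only on the potential and not on its splitting into a Guillemin part and a smooth part; combined with Lemma~\ref{lem:conti of potential}, which identifies $\vphi_F=\vphi\circ\pi_F^*$ with the smooth restriction of $\vphi$ to $F^\circ$, this gives the closed form $D_F(\zeta\|\zeta')=\vphi_F(\zeta)-\vphi_F(\zeta')+(\zeta'-\zeta)\cdot\grad(\vphi_F)(\zeta')$. In the chosen basis $\pi_F$ is the coordinate projection onto $(\xi_1,\ldots,\xi_k)$ and $\nu^{(1)},\ldots,\nu^{(n-k)}\in\frakt_F^{\perp}=\ker\pi_F$; hence for $r>n-k$ the vector $\pi_F(\nu^{(r)})$ is exactly the $F$-normal appearing in $\grad(\vphi_F)$, and since $\eta,\eta'\in F$ one has the bookkeeping identities $\bigl(\zeta'-\zeta\bigr)\cdot\pi_F(\nu^{(r)})=l^{(r)}(\eta')-l^{(r)}(\eta)$ and $(\zeta'-\zeta)\cdot\grad(f|_F)(\zeta')=(\eta'-\eta)\cdot\grad(f)(\eta')$. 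Splitting $\vphi_F=\vphi_P|_F+f|_F$ with $\vphi_P|_F=\tfrac12\sum_{r>n-k}l^{(r)}\log l^{(r)}$ (the facets $r\le n-k$ drop out because $l^{(r)}|_F\equiv0$ and $t\log t\to0$), a direct computation combines the $l^{(r)}(\eta)\log l^{(r)}(\eta)$ and $l^{(r)}(\eta)\log l^{(r)}(\eta')$ contributions into $l^{(r)}(\eta)\log\frac{l^{(r)}(\eta)}{l^{(r)}(\eta')}$ and reduces the closed form to precisely the displayed right-hand side.

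The hardest part is this final identification, which demands careful bookkeeping of the identifications $\frakt_F'\cong\frakt_F$, the isomorphism $\pi_F^*\colon\frakt_F^*\to\frakt^*$ onto its image $F$, and the distinction between the gradient $\grad$ on $P^\circ\subset\frakt^*$ and the gradient on $P_F^\circ$, the latter being the derivatives in the $F$-directions $\xi_1,\ldots,\xi_k$ alone. The geometric reason everything works is that the singular normals $\nu^{(1)},\ldots,\nu^{(n-k)}$ lie in $\ker\pi_F$, so their logarithmically divergent contributions are annihilated in the limit, while Lemma~\ref{lem:conti of potential} certifies that $\vphi|_{F^\circ}$ is a bona fide symplectic potential for $P_F$, making the limit genuinely the Bregman divergence $D_F$.
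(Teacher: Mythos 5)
Your proof is correct and follows essentially the same route as the paper's: the explicit formula of Proposition~\ref{prop : Divergence of P}, the iterated limit in which $t\log t\to 0$ kills the singular facet terms, and Lemma~\ref{lem:conti of potential} to recognize the resulting expression as $D_F$. If anything you are slightly more careful than the paper, which drops the linear terms $-(\xi-\xi')\cdot\nu^{(r)}$, $r\le n-k$, already at the inner limit (they in fact only vanish after $\xi'\to\eta'$, exactly as you note), and your splitting-independent closed form $D_F(\zeta\|\zeta')=\vphi_F(\zeta)-\vphi_F(\zeta')+(\zeta'-\zeta)\cdot\grad(\vphi_F)(\zeta')$ sidesteps the paper's looser identification of $\vphi_P|_{F^\circ}$ with the Guillemin potential $\vphi_{P_F}$.
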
 
\begin{proof}
By Proposition~\ref{prop :  Divergence of P} we have the formula 
\[
D(\xi \| \xi')=\frac{1}{2}\sum_{r=1}^{N}\left(l^{(r)}(\xi)\log\frac{l^{(r)}(\xi)}{l^{(r)}(\xi')}-(\xi-\xi')\cdot\nu^{(r)}\right)
 +(\xi'-\xi)\cdot (\grad(f)(\xi'))+f(\xi)-f(\xi'), 
 \]where $f:=\vphi-\vphi_P$. 
When $\xi$ tends to $\eta$ one has $l^{(r')}(\xi)\log (l^{(r')}(\xi)) \to 0 \ (r'=1,\ldots, n-k)$, and hence, we have the equality 
\[
\lim_{\xi\to \eta}D(\xi\|\xi')=\frac{1}{2}\sum_{r=n-k+1}^{N}\left(l^{(r)}(\eta)\log\frac{l^{(r)}(\eta)}{l^{(r)}(\xi')}-(\eta-\xi')\cdot\nu^{(r)}\right)
 +(\xi'-\eta)\cdot (\grad(f)(\xi'))+f(\eta)-f(\xi'). 
\]Since $l^{(r)}(\xi')>0$ and $l^{(r)}(\eta')>0$ for $r=n-k+1,\ldots, N$ one has  the equality 
\[
\lim_{\xi'\to\eta'}\lim_{\xi\to \eta}D(\xi\|\xi')=\frac{1}{2}\sum_{r=n-k+1}^{N}\left(l^{(r)}(\eta)\log\frac{l^{(r)}(\eta)}{l^{(r)}(\eta')}-(\eta-\eta')\cdot\nu^{(r)}\right)
 +(\eta'-\eta)\cdot (\grad f(\eta'))+f(\eta)-f(\eta'). 
\]By Lemma~\ref{lem:conti of potential} we have  the equality 
\[
f(\eta)=\vphi(\eta)-\vphi_P(\eta)=\vphi_F((\pi_F^*)^{-1}(\eta))-\vphi_{P_F}((\pi^*_F)^{-1}(\eta))
=:f_F((\pi_F^*)^{-1}(\eta)). 
\]By combining  the equality 
\[
(\eta'-\eta)\cdot ({\rm grad}(f)(\eta'))=((\pi_F^*)^{-1}(\eta')-(\pi_F^*)^{-1}(\eta))\cdot ({\rm grad}(f\circ\pi_F^*)((\pi_F^*)^{-1}(\eta')))
\]and Lemma~\ref{lem:conti of potential} for $P_F$ we have the desired equality 
\[
\lim_{\xi'\to \eta'}\lim_{\xi\to\eta}D(\xi \|\xi')=D_F((\pi_F^*)^{-1}(\eta)\|((\pi^*_F)^{-1})(\eta')). 
\]
\end{proof}

\begin{Rem}
The continuity of $D(\cdot \| \cdot)$ as two variable function cannot be expected. 
This is because of the discontinuity of $l^{(r)}(\xi)\log\frac{l^{(r)}(\xi)}{l^{(r)}(\xi')}$. 
\end{Rem}

\subsection{The generalized Pythagorean theorem}
The generalized Pythagorean theorem is a fundamental property of the divergence and important for applications to statistical inference. 
In this subsection we extend the generalized Pythagorean theorem to the boundary of Delzant polytopes. 

Hereafter we denote 
\[
D_F(\eta\|\eta'):=D_F((\pi_F^*)^{-1}(\eta)\|((\pi^*_F)^{-1})(\eta')) \quad (\eta, \eta'\in F^\circ)
\]for simplicity, and we put the  similar definition, $D_F'( \cdot \| \cdot):F^\circ\times P^\circ\to \bbR$, 
\[
D_F'(\eta \|\xi'):=\lim_{P^\circ\ni \xi\to \eta}D(\xi \| \xi') \quad (\eta\in F^\circ, \xi'\in P^\circ). 
\]
To state the Pythagorean theorem for $D_F$ and $D_F'$ we introduce notations for geodesics. 
For $\eta, \eta'\in F^\circ$ let $(\eta\eta')_{\nabla^{\rm flat}}$ be the geodesic from $\eta$ to $\eta'$ with respect to the trivial flat connection $\nabla^{\rm flat}$ on $\frakt^*$. 
For $\eta'\in F^\circ$ and $\xi\in P^\circ$ let $(\xi\eta')_{\nabla}$ be the geodesic $\gamma_\xi$ from $\xi$ with respect to the dual flat affine connection $\nabla=(\nabla^{\rm flat})^*$ on $P^\circ$ such that its limit goes to $\eta'\in F^\circ$, 
\[
\lim_{t\to \infty}\gamma_\xi(t)=\eta'.  
\]

\begin{thm}[The generalized Pythagorean theorem on the compactification]\label{thm:Pythagorean theorem}
For $\eta, \eta'\in F^\circ$ and $\xi''\in P^\circ$ if the tangential direction of $s((\xi''\eta')_{\nabla})$ has a limit which is perpendicular to 
$s((\eta\eta')_{\nabla^{\rm flat}})$ at $s(\eta')\in M_P$ with respect to the Riemannian metric $g^\vphi$ on $M_P$, then  we have the equality 
\[
D_F(\eta\| \eta')+D_F'(\eta'\|\xi'')=D_F'(\eta\|\xi''). 
\]
\end{thm}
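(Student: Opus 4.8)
The plan is to mimic the interior extended Pythagorean theorem and then let one argument run to the boundary, basing everything on the divergence formula of Proposition~\ref{prop : Divergence of P} and the limit already computed in the proof of Theorem~\ref{thm : conti divergence}. In the interior one has, for $\xi,\xi',\xi''\in P^\circ$, the elementary identity
\[
D(\xi\|\xi')+D(\xi'\|\xi'')-D(\xi\|\xi'')=(\xi'-\xi)\cdot(y(\xi')-y(\xi'')),\qquad y=\grad(\vphi),
\]
which follows from $D(\xi\|\xi')=\vphi(\xi)+\psi(y(\xi'))-\xi\cdot y(\xi')$ and Legendre duality, and which vanishes precisely when the $\nabla^{\rm flat}$-segment $\xi\to\xi'$ and the $\nabla$-segment $\xi''\to\xi'$ are orthogonal in the metric $G=\mathrm{Hess}(\vphi)$. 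I would reproduce this computation at the boundary.

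First I record the two mixed divergences. Since $\vphi$ extends continuously to $F^\circ$, letting $\xi\to\eta$ in $D(\xi\|\xi'')=\vphi(\xi)+\psi(y(\xi''))-\xi\cdot y(\xi'')$ gives
\[
D_F'(\eta\|\xi'')=\vphi(\eta)+\psi(y(\xi''))-\eta\cdot y(\xi''),
\]
and likewise with $\eta$ replaced by $\eta'$; this is exactly the inner limit computed in the proof of Theorem~\ref{thm : conti divergence}. For the face divergence I would apply Proposition~\ref{prop : Divergence of P} to $(P_F^\circ,G_F,\nabla^{\rm flat})$ and rewrite it in potential form; by Lemma~\ref{lem:conti of potential} the face potential is $\vphi_F=\vphi\circ\pi_F^*$, so in the coordinates with $\xi_{k+1}=\cdots=\xi_n=0$ fixed before that lemma one has $\vphi_F(\eta)=\vphi(\eta)$ and $y_F(\eta')_i=\lim_{\xi\to\eta'}\partial\vphi/\partial\xi_i$ for $i\le k$, whence
\[
D_F(\eta\|\eta')=\vphi(\eta)-\vphi(\eta')-\sum_{i=1}^{k}(\eta_i-\eta'_i)\,y_F(\eta')_i.
\]
Subtracting, using Legendre duality to cancel the potential terms, and noting that $\eta-\eta'$ has vanishing components along $e_{k+1},\ldots,e_n$, should leave the clean remainder
\[
D_F'(\eta\|\xi'')-D_F'(\eta'\|\xi'')-D_F(\eta\|\eta')=\sum_{i=1}^{k}(\eta_i-\eta'_i)\bigl(y_F(\eta')_i-y_i(\xi'')\bigr).
\]
Thus the asserted identity is equivalent to the vanishing of this bilinear pairing of the $F$-tangential direction $\eta-\eta'$ against a difference of dual coordinates.

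It then remains to recognize this remainder as the inner product in the hypothesis. The direction $\eta-\eta'$ is the tangent of $(\eta\eta')_{\nabla^{\rm flat}}$, a straight $\xi$-segment inside $F$; the curve $(\xi''\eta')_\nabla$ is a straight segment in the $y$-coordinate with constant direction $w$, so in $\xi$-coordinates its velocity is $G^{-1}w$. Because the section $s$ is Lagrangian (horizontal) and the Guillemin--Abreu metric has the block form $\mathrm{diag}(G,G^{-1})$ of Theorem~\ref{thm : toric Kahler metric}, the $g^\vphi$-inner product of the two lifted tangents collapses to $(\eta-\eta')^{\top}G\,G^{-1}w=(\eta-\eta')\cdot w=\sum_{i=1}^{k}(\eta_i-\eta'_i)w_i$, whose tangential part is governed by $y_F(\eta')_i-y_i(\xi'')$. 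Hence the displayed remainder is a multiple of the inner product of the two geodesic tangent directions at $s(\eta')$, and the perpendicularity hypothesis forces it to vanish, giving $D_F(\eta\|\eta')+D_F'(\eta'\|\xi'')=D_F'(\eta\|\xi'')$.

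The step I expect to be the main obstacle is this last identification, namely controlling the limit of the tangent direction of $s((\xi''\eta')_\nabla)$ as it approaches the boundary point $s(\eta')\in M_P(F)$. Near $F$ the Hessian $G=\mathrm{Hess}(\vphi)$ degenerates, its normal block blowing up like $\mathrm{diag}\bigl(1/(2l^{(r)})\bigr)$ through the Guillemin term $\vphi_P$, so $G^{-1}w\to 0$ and the limiting direction must be extracted from the block (Schur-complement) structure of $G$. Verifying that the tangential dual coordinates $y_i(\xi'')$ match the face values $y_F(\eta')_i$ in the limit, and that the surviving pairing is exactly $\sum_{i\le k}(\eta_i-\eta'_i)(y_F(\eta')_i-y_i(\xi''))$, is the technical heart of the argument; the remaining manipulations are the routine Legendre bookkeeping indicated above.
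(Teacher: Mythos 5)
Your reduction is sound and, up to the point where it stops, runs parallel to the paper's own argument: the three-point identity $D(\xi\|\xi')+D(\xi'\|\xi'')-D(\xi\|\xi'')=(\xi'-\xi)\cdot(y(\xi')-y(\xi''))$, the iterated limits $\xi\to\eta$, $\xi'\to\eta'$, and the resulting residual $\sum_{i=1}^{k}(\eta_i-\eta'_i)\bigl(y_F(\eta')_i-y_i(\xi'')\bigr)$ are all correct, including the vanishing of the normal contributions via $l\log l\to 0$ and the use of Lemma~\ref{lem:conti of potential} to identify $\vphi_F=\vphi\circ\pi_F^*$. The genuine gap is the step you yourself flag as ``the technical heart'': you never actually derive the vanishing of this residual from the perpendicularity hypothesis. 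Since that is the only place the hypothesis enters, what the proposal establishes is that the theorem is \emph{equivalent} to the vanishing of the residual pairing, not the theorem itself; the sketched identification via the Schur-complement structure of the degenerating Hessian is left entirely open.

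The paper closes exactly this gap by a device you mention but do not use decisively: it never takes the limit of the pairing in $P$-coordinates at all. Because the Lagrangian section $s|_{P^\circ}$ is an isometric embedding of $(P^\circ,G)$ into $(M_P,g^\vphi)$, the interior combination already equals $g^\vphi\bigl(ds(v(\xi'\xi)_{\nabla^{\rm flat}}),\, ds(v(\xi''\xi')_{\nabla})\bigr)$ before any limit is taken; one then lets $\xi\to\eta$ and $\xi'\to\eta'$ (the latter along $(\xi''\eta')_{\nabla}$) \emph{inside} $M_P$, where $g^\vphi$ is smooth at $s(\eta')\in M_P(F)$ and the hypothesis says precisely that the limiting directions of the two pushed-forward velocities are $g^\vphi$-perpendicular there. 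The degeneration of $G={\rm Hess}(\vphi)$ near $F$, which you propose to analyze block by block, is an artifact of symplectic coordinates and disappears upstairs, so no such analysis occurs in the paper. Alternatively, your route could be completed within $P$ itself: along $(\xi''\eta')_{\nabla}$ the tangential components $y_i$ ($i\le k$) are affine in the parameter, and by continuity of the tangential derivatives of $\vphi$ up to $F^\circ$ they converge to $y_F(\eta')_i$; an affine function with a finite limit is constant, forcing $y_i(\xi'')=y_F(\eta')_i$ and hence the vanishing of the residual. But this argument, or any substitute for it, is absent from the proposal as written.
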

\begin{proof}
We show 
\[
D_F(\eta\| \eta')+D_F'(\eta'\|\xi')-D_F'(\eta\|\xi')=
\lim_{P^\circ\ni\xi'\to \eta'}\lim_{P^\circ\ni\xi\to \eta}(D(\xi\| \xi')+D(\xi'\|\xi'')-D(\xi\|\xi''))=0. 
\]
By definition of the Bregman divergence and the direct computation we have the equalities 
\begin{eqnarray*}
D(\xi\| \xi')+D(\xi'\|\xi'')-D(\xi\|\xi'')&=&(x(\xi)-x(\xi'))\cdot(y(\xi')-y(\xi'')) \\ 
&=&(\xi-\xi')\cdot(y(\xi')-y(\xi'')) \\ 
&=&G(v(\xi'\xi)_{\nabla^{\rm flat}}, v(\xi''\xi')_{\nabla}), 
\end{eqnarray*}
where $v(\xi'\xi)_{\nabla^{\rm flat}}$ (resp. $v(\xi''\xi')_{\nabla})$ is the velocity vector of 
$(\xi'\xi)_{\nabla^{\rm flat}}$ (resp. $(\xi''\xi')_{\nabla})$) at $\xi'$ and
$G={\rm Hess}(\vphi)$ is considered as a Riemannian metric on $P^\circ$. 
Moreover since $s|_{P\circ}:P^\circ\to M_P^\circ$ is an isometric embedding one has the equalities  
\begin{eqnarray*}
D(\xi\| \xi')+D(\xi'\|\xi'')-D(\xi\|\xi'')&=&G(v(\xi'\xi)_{\nabla^{\rm flat}}, v(\xi''\xi')_{\nabla}) \\ 
&=&g^\vphi(ds(v(\xi'\xi)_{\nabla^{\rm flat}}), ds(v(\xi''\xi')_{\nabla})), 
\end{eqnarray*}
and as $\xi$ tends to $\eta$ in $P$ we have the limit 
\[
D(\xi\| \xi')+D(\xi'\|\xi'')-D(\xi\|\xi'')\to g^\vphi(ds(v(\xi'\eta)_{\nabla^{\rm flat}}), ds(v(\xi''\xi')_{\nabla})). 
\]
By the assumption we have the direction of the limit of $s(\gamma_{\xi''})$ which is perpendicular to 
$s((\eta\eta')_{\nabla})$ at $s(\eta')\in M_P$ with respect to $g^\vphi$, and hence,  we have the limit
\[
g^\vphi(ds(v(\xi'\eta)_{\nabla^{\rm flat}}), ds(v(\xi''\xi')_{\nabla}))\to0 \ {\rm as} \ \xi'\to \eta'. 
\]
Figure~\ref{fig1} is a schematic picture of the configuration. 
It implies 
\[
\lim_{P^\circ\ni\xi'\to \eta'}\lim_{P^\circ\ni\xi\to \eta}(D(\xi\| \xi')+D(\xi'\|\xi'')-D(\xi\|\xi''))=0. 
\]
\end{proof}

\begin{figure}[h]
\centering
\includegraphics[scale=0.37]{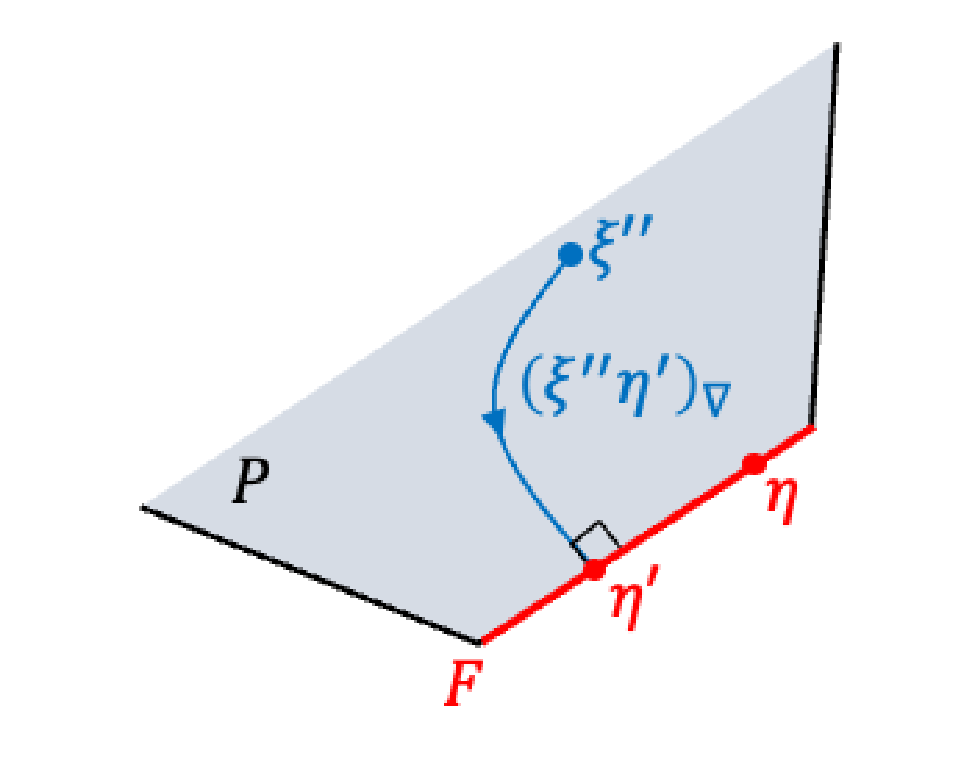}
\caption{Configuration of $\eta, \eta'$ and $\xi''$} \label{fig1}
\end{figure}

We also have an another version, which can be shown by the similar argument as for Theorem~\ref {thm:Pythagorean theorem}.  

\begin{thm}[The generalized Pythagorean theorem on the compactification~2]\label{thm:Pythagorean theorem2}
For $\eta\in F^\circ$ and $\xi, \xi'\in P^\circ$ if the $\nabla^{\rm flat}$-geodesic $(\eta\xi)_{\nabla^{\rm flat}}$ and the $\nabla$-geodesic $(\xi\xi')_{\nabla}$ are perpendicular at $\xi$, then we have the equality 
\[
D_F'(\eta \| \xi)+D(\xi \|\xi')=D'_F(\eta\|\xi'). 
\]
\end{thm}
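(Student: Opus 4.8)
The plan is to mirror the three-term computation that powered the proof of Theorem~\ref{thm:Pythagorean theorem}, now evaluating the geodesic inner product at $\xi$ rather than at the boundary limit $\eta'$. The starting point is the same algebraic identity for the Bregman divergence: for any three points $p,q,r\in P^\circ$,
\[
D(p\|q)+D(q\|r)-D(p\|r)=(x(p)-x(q))\cdot(y(q)-y(r)),
\]
which is the trilinear "generalized Pythagorean" relation valid in the interior. Applying it with $p\to$ (a point converging to $\eta$), $q=\xi$ and $r=\xi'$, I would write
\[
D(\xi_0\|\xi)+D(\xi\|\xi')-D(\xi_0\|\xi')=(x(\xi_0)-x(\xi))\cdot(y(\xi)-y(\xi')),
\]
where $\xi_0\in P^\circ$ is a point on the $\nabla^{\rm flat}$-geodesic from $\xi$ toward $\eta$. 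The right-hand side is exactly $G(v,w)$ with $v$ the $\nabla^{\rm flat}$-velocity of $(\xi\xi_0)_{\nabla^{\rm flat}}$ and $w$ the $\nabla$-velocity of $(\xi'\xi)_{\nabla}$, both taken at $\xi$, since $G={\rm Hess}(\vphi)$ realizes the duality pairing between the two affine coordinate systems $(x,y)$.

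Next I would take the limit $\xi_0\to\eta$. Because the term $D(\xi\|\xi')$ is fixed and finite (both arguments stay in $P^\circ$), and because $D_F'(\eta\|\cdot)$ is defined precisely as the limit $\lim_{\xi_0\to\eta}D(\xi_0\|\cdot)$, the left-hand side converges to $D_F'(\eta\|\xi)+D(\xi\|\xi')-D_F'(\eta\|\xi')$. On the right-hand side, the velocity vector $v$ of the $\nabla^{\rm flat}$-geodesic $(\xi\xi_0)_{\nabla^{\rm flat}}$ tends to the velocity of the geodesic segment $(\xi\eta)_{\nabla^{\rm flat}}$ emanating from $\xi$ toward $\eta$, while $w$ is unchanged. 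Thus the right-hand side tends to $G\bigl(v(\xi\eta)_{\nabla^{\rm flat}},\,v(\xi'\xi)_{\nabla}\bigr)$. Transporting this pairing to $M_P$ via the isometric embedding $s|_{P^\circ}$ (as in Theorem~\ref{thm:Pythagorean theorem}), it equals $g^\vphi\bigl(ds(v),ds(w)\bigr)$, and the perpendicularity hypothesis — that $(\xi\eta)_{\nabla^{\rm flat}}$ and $(\eta\eta')_{\nabla}$ meet orthogonally at $\xi$ — forces this inner product to vanish. I should be slightly careful that the stated orthogonality is between $(\xi\eta)_{\nabla^{\rm flat}}$ and $(\eta\eta')_{\nabla}$ at $\xi$; since $\xi$ lies on the $\nabla$-geodesic joining $\xi'$ and $\eta$ with the appropriate orientation, the $\nabla$-velocity $w=v(\xi'\xi)_{\nabla}$ points along the same geodesic as $(\eta\eta')_{\nabla}$, so the hypothesis applies to exactly the pair of vectors appearing in the pairing.

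The main obstacle I anticipate is justifying that the limit may be taken inside the geometric pairing, i.e. that the convergence of the $\nabla^{\rm flat}$-velocity $v(\xi\xi_0)_{\nabla^{\rm flat}}\to v(\xi\eta)_{\nabla^{\rm flat}}$ as $\xi_0\to\eta\in F^\circ$ is genuine and that the $g^\vphi$-inner product is continuous up to the boundary point $\eta$. Unlike the interior-to-interior case, here the endpoint $\eta$ sits on the facet stratum $F^\circ$, so I must invoke the boundary regularity established earlier: the section $s:P\to M_P$ extends continuously to $F^\circ$ with $(s|_{F^\circ})\circ\pi_F^*$ a Lagrangian section (as in the proof of Lemma~\ref{lem:conti of potential}), and the relevant tangential directions extend continuously to $s(\eta)\in M_P(F)$. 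Once that continuity is in hand, the orthogonality assumption translates verbatim into the vanishing of the limit, and rearranging yields
\[
D_F'(\eta\|\xi)+D(\xi\|\xi')=D_F'(\eta\|\xi'),
\]
which is the claim. The rest is the routine bookkeeping already rehearsed in Theorem~\ref{thm:Pythagorean theorem}, so I would simply refer to that argument for the identification of $G$-pairings with $g^\vphi$-pairings under $ds$.
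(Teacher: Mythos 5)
Your proof is correct and coincides with what the paper intends: the paper gives no separate argument for Theorem~\ref{thm:Pythagorean theorem2}, saying only that it follows by ``the similar argument as for'' Theorem~\ref{thm:Pythagorean theorem}, and your write-up is exactly that argument (the three-point identity, the limit in the first slot, identification of the limiting right-hand side with the metric pairing, then orthogonality), including a sensible reading of the statement's typo $(\eta\eta')_{\nabla}$ as the $\nabla$-geodesic through $\xi$ and $\xi'$. The only inessential blemish is your anticipated ``main obstacle'': no boundary regularity of $s$ or of limiting tangent directions is needed, because the pairing $G\bigl(v(\xi\eta)_{\nabla^{\rm flat}},\,v(\xi'\xi)_{\nabla}\bigr)$ is evaluated at the interior point $\xi$, where $G$ is smooth and positive definite --- which is precisely why this version is simpler than Theorem~\ref{thm:Pythagorean theorem}, whose orthogonality hypothesis lives at the boundary point $s(\eta')$.
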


\section{Examples}
In this section we demonstrate computations of several quantities in examples. 

\subsection{Isosceles triangle and $\bbC P^2$}
We first exhibit an example of our main theorem and verify our formula. 

\vspace{0.5cm}

\noindent
\underline{\bf Set-up. }

\medskip

Let $P$ be the triangle in $x_1x_2$-plane defined by inequalities 
\[
l^{(1)}(x_1, x_2)=x_1 \geq 0, \ l^{(2)}(x_1, x_2)=x_2 \geq 0, \ l^{(3)}(x_1, x_2)=1-x_1-x_2 \geq 0. 
\]
This is the image of the map $\mu : M_P=\bbC P^2\to \bbR^2$ defined by 
\[
\mu([z_1:z_2:z_3]):=\frac{1}{|z_1|^2+|z_2|^2+|z_3|^2}(|z_1|^2, |z_2|^2), 
\]which is the moment map for the Hamiltonian $T=(S^1)^2$-action 
\[
(t_1, t_2)\cdot[z_1:z_2:z_3]=[t_1z_1:t_2z_2:z_3]
\]on $\bbC P^2$. 
This $P$ can be also seen as a family of probability density functions $\Theta_P$ on the finite set $[3]=\{1,2,3\}$, 
\[
p(\xi|k)= 
\begin{cases}
\hspace{0.8cm} \xi_1 \qquad (k=1) \\ 
\hspace{0.8cm} \xi_2 \qquad (k=2) \\
1-\xi_1-\xi_2 \hspace{0.2cm} (k=3)
\end{cases}
\]for $\xi=(\xi_1, \xi_2)\in P$. 
$\Theta_P$ is often called the {\it categorical distribution} of three points. 

\vspace{0.5cm}

\noindent
\underline{\bf Potential, dual coordinate and divergence. }

\medskip

The Guillemin potential $\vphi_P:P\to \bbR$ is given by\[
\vphi_P(x_1, x_2)=\frac{1}{2}\left(x_1\log x_1+x_2\log x_2+(1-x_1-x_2)\log(1-x_1-x_2)\right)
\]whose Hessian is 
\[
G_P(x_1, x_2)=\frac{1}{2}
\begin{pmatrix}
\frac{1}{x_1}+\frac{1}{1-x_1-x_2} & \frac{1}{1-x_1-x_2} \\
\frac{1}{1-x_1-x_2} & \frac{1}{x_2}+\frac{1}{1-x_1-x_2}
\end{pmatrix}. 
\]
This is the Fisher metric of $\Theta_P$ and 
the corresponding toric K\"ahler metric is the Fubini-Study metric on $M_P=\bbC P^2$. 

The dual coordinate of $x=(x_1, x_2)$ and its inverse map are given by 
\[
y=(y_1, y_2)={\rm grad}(\vphi_P)=\left(\frac{1}{2}\log\frac{x_1}{1-x_1-x_2}, \ \frac{1}{2}\log\frac{x_2}{1-x_1-x_2}\right)
\]and 
\[
x=(x_1, x_2)=\left(\frac{e^{2y_1}}{1+e^{2y_1}+e^{2y_2}}, \ \frac{e^{2y_2}}{1+e^{2y_1}+e^{2y_2}}\right). 
\]
By Proposition~\ref{prop : Divergence of P}  and comments in Subsection~\ref{subsec : prob densi func} the divergence is given by 
\[
D_P(x \| x')=\frac{1}{2}\left(x_1\log\frac{x_1}{x_1'}+x_2\log\frac{x_2}{x_2'} +(1-x_1-x_2)\log\frac{1-x_1-x_2}{1-x_1'-x_2'}\right). 
\]

\vspace{0.5cm}

\noindent
\underline{\bf Behavior on the boundary. }

\medskip

Let $F$ be a boundary of $P$ defined by 
\[
l^{(3)}(x_1, x_2)=1-x_1-x_2=0.
\]
The corresponding submanifold $\mu^{-1}(F)$ is 
\[
M_P(F)=\{[z_1:z_2:z_3]\in \bbC P^2 \ | \ z_3=0 \}
\]whose stabilizer subgroup $T_F^\perp$ is the diagonal subgroup in $T$.  
An isomorphism $T_F=T/T_F^\perp\to S^1$ is given by $[t_1, t_2]\mapsto t_1^{-1}t_2$. 
$M_P(F)$ is diffeomorphic to $\bbC P^1$, and the  induced K\"ahler metric on $M_P(F)$ is the Fubini-Study metric on $\bbC P^1$ with the Guillemin potential $\vphi_{P_F}: P_F\to \bbR$, 
\[
\vphi_{P_F}(\eta)=\frac{1}{2}\left(\eta\log \eta + (1-\eta)\log (1-\eta)\right)
\] for $P_F=[0,1]$. 
 The affine map $\bbR\to \bbR^2$, $\eta\mapsto (\eta,1-\eta)$ gives an identification between $P_F=[0,1]$ and $F$. 
In this case Theorem~\ref{thm : conti divergence} can be seen as 
\begin{eqnarray*}
\lim_{x_1'+x_2'\to 1}\lim_{x_1+x_2\to 1}D_P(x \| x')&=&\frac{1}{2}\left(x_1\log\frac{x_1}{x_1'}+x_2\log\frac{x_2}{x_2'}\right) \\ 
&=& \frac{1}{2}\left(x_1\log\frac{x_1}{x_1'}+(1-x_1)\log\frac{1-x_1}{1-x_1'}\right) \\ 
&=& D_F(x_1 \| x_1')
\end{eqnarray*} for $(x_1, x_2), (x_1', x_2')\in F^\circ$. 

\vspace{1cm}
\noindent 
\underline{\bf Generalized Pythagorean theorem. }

\medskip

We first investigate geodesics on the dually flat space $(P^\circ, G_P, \nabla^{\rm flat})$. 
$\nabla^{\rm flat}$ (resp. $\nabla$)-geodesics are nothing other than the affine functions with respect to $x=(x_1, x_2)$ (resp. $y=(y_1, y_2)$). 
Namely for $\xi:=(a,b)\in P^\circ$ and $(v_1, v_2)\in \bbR^2$ an affine function 
\[
y(t)=(y_1(t), y_2(t))=\left(\frac{1}{2}\log \frac{a}{1-a-b}+tv_1, \frac{1}{2}\log \frac{b}{1-a-b}+tv_2\right) \quad (t\in\bbR)
\]gives a $\nabla$-geodesic $\gamma:\R\to P^\circ$, 
\[
\gamma(t)=x(y(t))=\left(\frac{ae^{2tv_1}}{1-a-b+ae^{2tv_1}+be^{2tv_2}}, \frac{be^{2tv_2}}{1-a-b+ae^{2tv_1}+be^{2tv_2}}\right)
\]starting from $\xi$. 
We assume $v_1, v_2>0$ for simplicity. 
We have the limit of $\gamma(t)$ as follows. 
\begin{enumerate}
\item[Case~1.] $v_1>v_2$ : $\displaystyle\lim_{t\to\infty}\gamma(t)=(1,0)$
\item[Case~2.] $v_1<v_2$ : $\displaystyle\lim_{t\to\infty}\gamma(t)=(0,1)$ 
\item[Case~3.] $v_1=v_2$ : $\displaystyle\lim_{t\to\infty}\gamma(t)=\left(\frac{a}{a+b}, \frac{b}{a+b}\right)$
\end{enumerate}

\begin{figure}[h]
\centering
\includegraphics[scale=0.4]{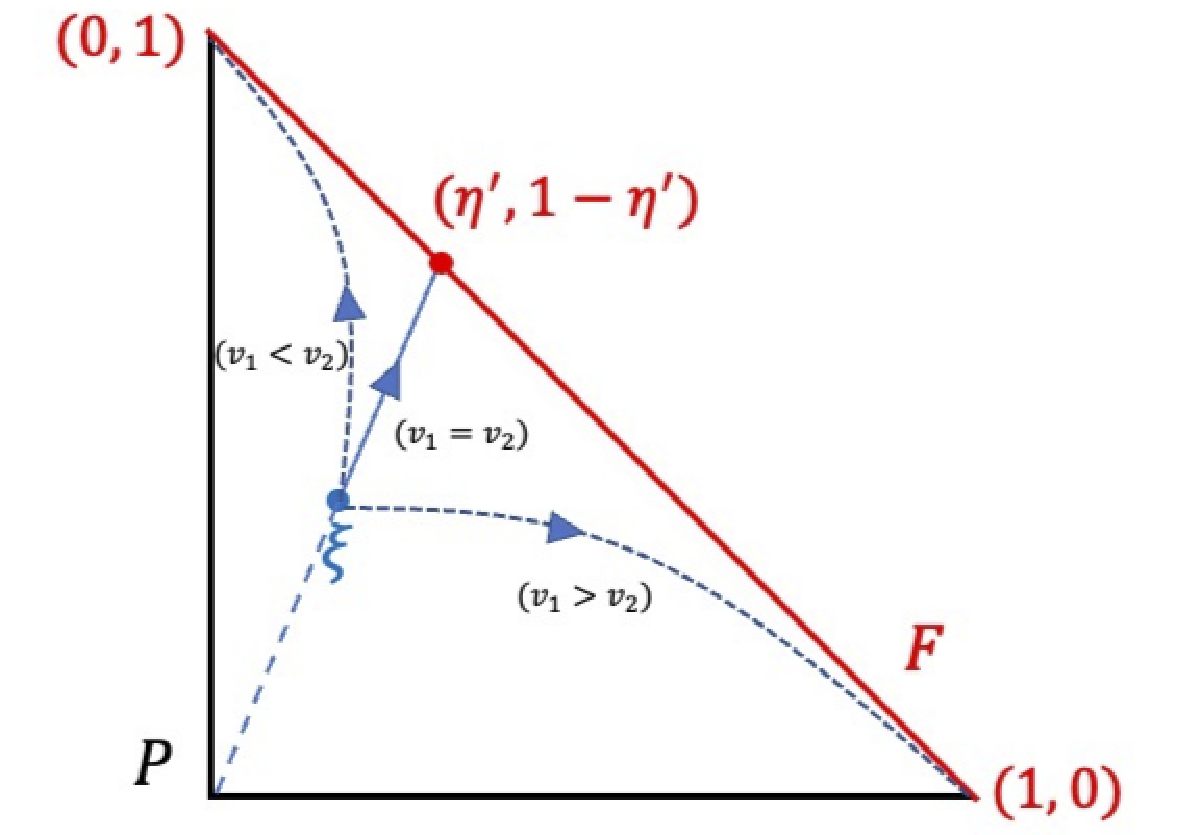}
\caption{Geodesics from $\xi$} \label{fig3}
\end{figure}

All these three cases give points in $F$, however, only the Case~3 gives a point in $F^\circ$ and we study this case in relation to Theorem~\ref{thm:Pythagorean theorem}. Put $v=v_1=v_2$. 
In fact if $v_1=v_2$, then $\gamma(t)$ lies on the line $bx_1=ax_2$. 
We take a section $s:P\to \bbC P^2$, 
\[
s(x_1, x_2):=[\sqrt{x_1} : \sqrt{x_2} : \sqrt{1-x_1-x_2} ]. 
\]This gives a smooth curve
\begin{eqnarray*}
&s(\gamma(t))& \\ 
&=& \left[\sqrt{\frac{ae^{2tv}}{1-a-b+ae^{2tv}+be^{2tv}}} 
: \sqrt{\frac{be^{2tv}}{1-a-b+ae^{2tv}+be^{2tv}}} : 
\sqrt{\frac{1-a-b}{1-a-b+ae^{2tv}+be^{2tv}}} \right] 
\\ 
&=&[\sqrt{a} : \sqrt{b} : e^{-tv}\sqrt{1-a-b}]. 
\end{eqnarray*}
Take $\eta\in P_F$ and put $\eta':=\frac{a}{a+b}$,  which correspond to $(\eta, 1-\eta)$ and $(\eta', 1-\eta')=\left(\frac{a}{a+b}, \frac{b}{a+b}\right)\in F^\circ$. 
We also have $s(\eta,1-\eta)=[\sqrt{\eta} :\sqrt{1-\eta}:0]$ and $s(\eta', 1-\eta')=[\sqrt{a}:\sqrt{b}:0] \in M_P(F)=\bbC P^1\subset M_P=\bbC P^2$. 
The limit of the tangential direction of $s(\gamma(t))$ is perpendicular to $M_P(F)$ with respect to the Fubini-Study metric. 
This observation implies that three points $(a,b), (\eta, 1-\eta)$ and  $(\eta', 1-\eta')$ satisfy the assumption in Theorem~\ref{thm:Pythagorean theorem}. 
For these points we have the equalities 
\begin{eqnarray*}
D_F(\eta\|\eta')&=&\frac{1}{2}\left( \eta\log\frac{\eta}{\eta'}+(1-\eta)\log\frac{1-\eta}{1-\eta'}\right) \\ 
&=&\frac{1}{2}\left(\eta\log\frac{\eta(a+b)}{a}+(1-\eta)\log\frac{(1-\eta)(a+b)}{b}\right) \\ 
&=& \frac{1}{2}\left(\eta\log\frac{\eta}{a} +(1-\eta)\log\frac{(1-\eta)}{b}+\log(a+b)\right), 
\end{eqnarray*}
\begin{eqnarray*}
D_P(\gamma(t) \| \xi) &=& \frac{1}{2}\left( \frac{ae^{2tv}}{1-a-b+ae^{2tv}+be^{2tv}} \log \frac{e^{2tv}}{1-a-b+ae^{2tv}+be^{2tv}} \right. \\ 
&& \hspace{2.5cm} \left. +\frac{be^{2tv}}{1-a-b+ae^{2tv}+be^{2tv}}\log\frac{e^{2tv}}{1-a-b+ae^{2tv}+be^{2tv}} \right. \\ 
&& \hspace{2.5cm} \left. +\frac{1-a-b}{1-a-b+ae^{2tv}+be^{2tv}}\log \frac{1}{1-a-b+ae^{2tv}+be^{2tv}}\right)\\ 
&\rightarrow& \frac{1}{2}\left(\frac{a}{a+b}\log\frac{1}{a+b}+\frac{b}{a+b}\log\frac{1}{a+b} \right) \quad  (t\to \infty) \\ 
&=&-\frac{1}{2}\log(a+b) \\ 
&=&D_F'(\eta'\|\xi)
\end{eqnarray*}
and 
\[
D_F'(\eta\|\xi)=\frac{1}{2}\left(\eta\log \frac{\eta}{a}+(1-\eta)\log\frac{1-\eta}{b}\right). 
\]Summarizing we have the equality
\[
D_F'(\eta\|\xi)=D_F(\eta\|\eta')+D_F'(\eta'\|\xi)
\]as in Theorem~\ref{thm:Pythagorean theorem}.

\subsection{Non-compact example}

The notion of torification does not need the compactness of the toric manifold. 
In fact as it is studied in \cite{Molitor} several important examples from information geometry have non-compact regular torifications. 
Here we show that the compactness is not also essential for our main theorems by giving a (relatively trivial) non-compact example. 

Consider the standard $S^1$-action on the complex plane $\bbC$. 
This can be seen as a non-compact toric manifold with the moment map 
\[
\mu:\bbC\to\bbR, \quad z\mapsto \frac{|z|^2}{2}
\]whose moment map image is $\bbR_{\geq 0}$. 
Consider the potential function $\vphi : \bbR_{>0}\to \bbR$, \ $\xi\mapsto \xi\log \xi$ with its Hessian 
$G={\rm Hess}(\vphi)=\frac{1}{\xi}$. 
This gives the Bregman divergence 
\[
D(\xi\|\xi')=\xi\log\frac{\xi}{\xi'}+\xi'-\xi. 
\]
The potential $\vphi$ gives a K\"ahler structure and a Riemannian metric 
\[
\begin{pmatrix}
\frac{1}{\xi} & 0 \\
0 & \xi
\end{pmatrix}
=\frac{d\xi^2}{\xi}+\xi dt^2
\]on $\bbC^\circ=\bbC\setminus\{0\}\cong \bbR_{+}\times S^1$.
This is the Euclidean metric on $\bbC=\bbR^2$ with respect to the polar coordinate 
\[
z=\sqrt{\xi}e^{it}. 
\] 
As it is shown in \cite{Molitor}, this toric manifold is the regular torification of the dually flat space corresponding to the {\it Poisson distribution}. 

Let $P$ be an $n$-dimensional Delzant polytope with corresponding symplectic toric manifold $M_{P}$ with the moment map $\mu_{P}:M_{P}\to P$. 
Consider the Guillemin potential $\vphi_{P}:P\to \bbR$ and the associated Bregman divergence $D_P$.  
Let $\widetilde M$ (resp. $\widetilde P$) be the product of $M_{P}$ and $\bbC$ (resp. $P$ and $\bbR_{\geq 0}$), 
$\widetilde M= M_{P}\times \bbC$ (resp. $\widetilde P=P\times \bbR_{\geq 0}$). 
Note that the boundary is given by 
\[
\partial \widetilde P=(\partial P\times \bbR_{> 0})\cup (P^\circ\times \{0\}).
\] 
There are the associated moment map $\widetilde \mu=\mu_P\times\mu : \widetilde M\to \widetilde P$ and potential $\tilde \vphi=\vphi_P+\vphi : \widetilde P\to\bbR$.
The associated divergence $\widetilde D:\widetilde P\times \widetilde P\to\bbR$ is given by  
\[
\widetilde D((\xi_1, \xi_2)\|(\xi_1', \xi_2'))=D_P(\xi_1\|\xi_1')+D(\xi_2\|\xi_2'), 
\]which has natural extension to the boundary $\partial \widetilde P$. 
For this divergence one can check the extended Pythagorean theorem 
\[
\widetilde D((\eta,\xi_2)\|(\xi_1, \xi_2'))=\widetilde D((\eta, \xi_2)\|(\xi_1,\xi_2))+\widetilde D((\xi_1, \xi_2)\|(\xi_1, \xi_2'))
\]for $(\eta,\xi_2)\in \partial P\times \bbR_{>0}$ and $(\xi_1, \xi_2), (\xi_1,\xi_2')\in  P^\circ\times\bbR_{>0}$. 
We also have  the equality  
\[
\widetilde D((\xi_1, 0)\|(\xi_1',\xi_2))=\widetilde D((\xi_1, 0)\|(\xi_1,\xi_2))+\widetilde D((\xi_1, \xi_2)\|(\xi_1', \xi_2))
\]for $(\xi_1, 0)\in P^\circ\times\{0\}$ and $(\xi_1',\xi_2), (\xi_1,\xi_2)\in P^\circ\times \bbR_{>0}$. 
See Figure~\ref{fig4} for the configurations of points in the above equalities. 
\begin{figure}[h]
\centering
\includegraphics[scale=0.3]{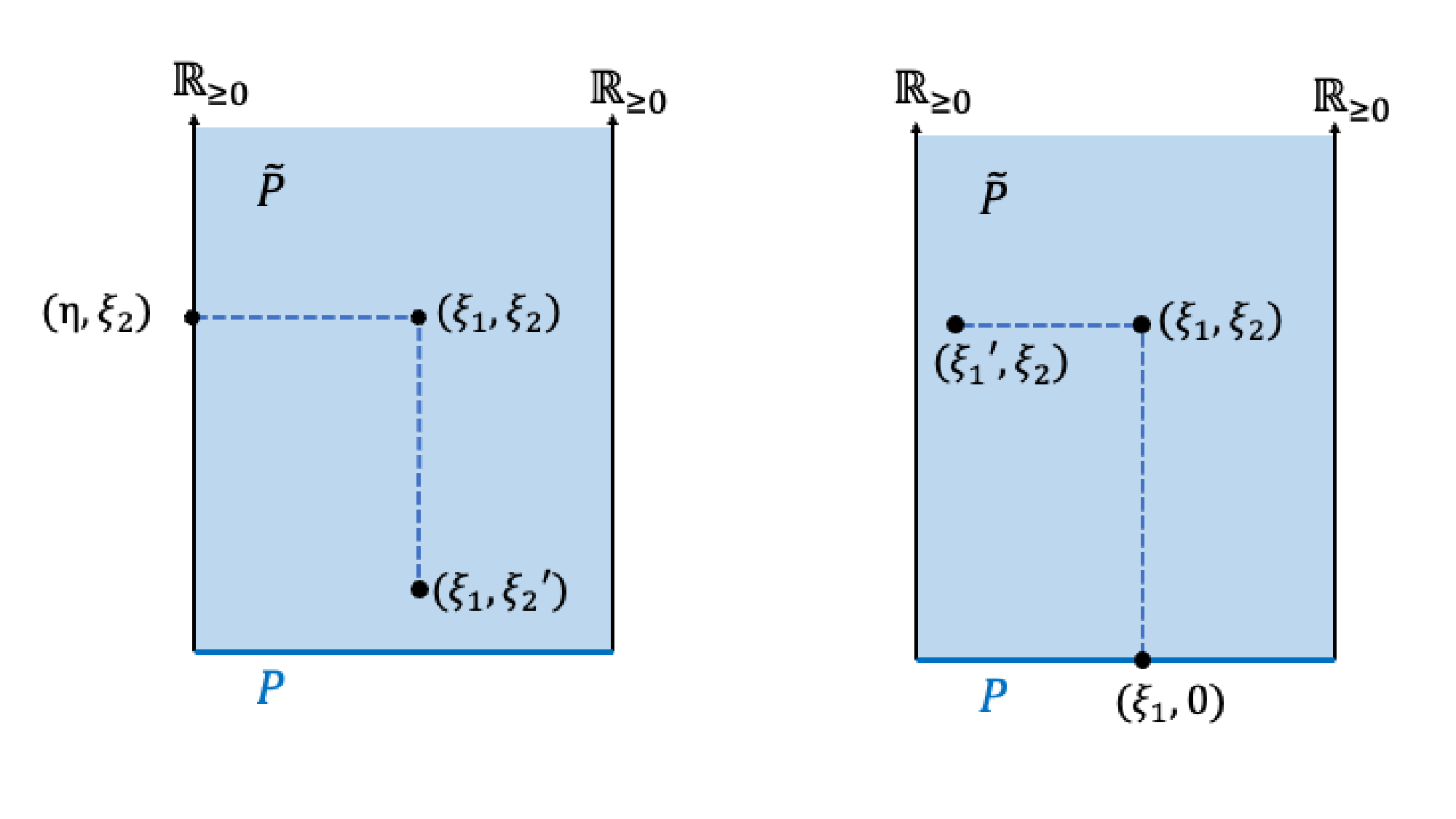}
\caption{Configurations in $\widetilde P$} \label{fig4}
\end{figure}

\bibliography{refdelbeppufujimitsu}

\end{document}